\def\Limsup{\mathop{{\rm Lim}\,{\rm sup}}}
\def\dom{\mbox{\rm dom}\,}
\def\R{\mathbb{R}}
\def\N{\mathbb{N}}
\newtheorem {theorem}{Theorem}[section]
\newtheorem {corollary}{Corollary}[section]
\newtheorem {proposition}{Proposition}[section]
\newtheorem {lemma}{Lemma}[section]
\newtheorem {example}{Example}[section]
\newtheorem {definition}{Definition}[section]
\newtheorem {remark}{Remark}[section]
\title{On error bounds and optimality conditions at infinity}
\author{NGUYEN VAN TUYEN$^{1}$}
\address{$^1$Department of Mathematics, Hanoi Pedagogical University 2, Xuan Hoa, Phuc Yen, Vinh Phuc, Vietnam}
\email{nguyenvantuyen83@hpu2.edu.vn; tuyensp2@yahoo.com}
\date{\today}
\keywords{Error bound at infinity, Limiting subdifferential at infinity, Normal cone at infinity, Optimality conditions at infinity}
\subjclass{49K40, 90C26, 49J52, 90C46}
\begin{document}
	
	\maketitle

{\centerline{\em Dedicated to Professor Do Sang Kim on the occasion of his 70th birthday}}
	\begin{abstract}
In this paper, we establish sufficient  conditions for the existence of error bounds  at infinity for lower semicontinuous  inequality systems. We also show that the existence of an error bound at infinity of  constraint systems plays an important role in deriving necessary optimality conditions at infinity for constrained optimization problems.     
	\end{abstract}
	
\section{Introduction}\label{Introduction}
Consider the inequality system
\begin{equation}\label{gerenal-constraint-system}
	S:=\{x\in \Omega\subset X\;:\; g(x)\leq 0\}
\end{equation}
where $X$ is a Banach space, $\Omega$ is a nonempty and closed subset in $X$, and $g\colon X\to\overline{\mathbb{R}}$ is a lower semicontinuous function.

We say that $S$ has a local error bound at a point $\bar x\in S$ if there $\tau>0$ and a neighborhood $V$ of $\bar x$ such that
\begin{equation}\label{local-error-bound}
	d(x; S)\leq \tau [g(x)]_+\  \ \forall x\in \Omega\cap V,
\end{equation}  
where $d(x; S)$ denotes the distance from $x$ to $S$ and $[g(x)]_+:=\max\{g(x), 0\}$. If we can choose $V=X$ in \eqref{local-error-bound}, then we say that $S$ has a global/Hoffman error bound.

The concept of error bound plays a center role in  many fields of mathematical programming theory including, for example, in analyzing the convergence of algorithms, in studying optimality conditions,  stability, sensitivity,  subdifferential calculus, and so on. Today, the literature on error bounds is rich and there are many conditions that ensure the existence of error bounds, see, for example \cite{Auslender,Dinh,Ha,Hang,Hoffman_52,Ioffe-79,Jourani-2000,Gli-15,Gli-18,Luo,Ngai-2005,Ngai-2009,Pang-1997,Robinson-1975,Robinson-1979,Wei,Wei-2020}. 

To the best of our knowledge, the first result on global error bounds was established by Hoffman \cite{Hoffman_52}. The author proved that if $g$ is  a maximum of a finite number of affine functions in $\R^n$, then $S$ has  a global error bound. In \cite{Robinson-1979}, Robinson extended the Hoffman's result to bounded convex differentiable inequality systems in infinite dimensional spaces. The existence of global error bound for unbounded convex inequality systems in reflexive Banach spaces was studied by Deng \cite{Deng}. In \cite{Ioffe-79}, Ioffe presented a sufficient condition 
for the existence of a local error bound for nonconvex and nondifferentiable Lipschitz continuous equality system in infinite dimension.  The relationship between the existence of local error bounds and Abadie constraint qualification conditions for convex inequality systems  can be found in \cite{Jourani-2000,Ngai-2005,Ngai-2009,Wei,Wei-2020}. Regarding to the existence of H\"older error bounds, we refer the readers to references \cite{Dinh,Ha,Gli-15,Luo,Pang-1997,Wang}.  

In mathematical programming, there are numerous problems where the objective function is bounded from below but nevertheless fails to attain its minimum at any finite feasible solution; see, for example, \cite{Dudik}.  When this happens, the Fermat's rule cannot be applied. However, there exists a sequence of feasible solutions tending to infinity such that  the corresponding sequence of objective values tends to its infimum. Therefore, optimality conditions in this case must somehow be taken  {\em``at infinity''}. To do this, very recently, Nguyen and Pham \cite{Tung-Son-22} introduced the notions  of Clarke's tangent,  normal cones, subgradients, Lipschitz continuity, ..., at infinity and established necessary optimality conditions at infinity. Thereafter, Kim, Nguyen and Pham \cite{Kim-Tung-Son-23} proposed the concepts of limiting normal cones at infinity to unbounded sets as well as limiting and singular subdifferentials at infinity and then they derived necessary optimality conditions as well as sufficient conditions for the weak sharp minima property at infinity. To obtain necessary optimality conditions at infinity, the authors used a constraint qualification of Mangasarian--Fromovitz type at infinity for Lipschitz at infinity functions. We note here that both assumptions on the Lipschitzness and the Mangasarian--Fromovitz constraint qualification  at infinity are rather strict.    

To the best of our knowledge, so far there have been no papers studying the concept of error bound at infinity. In this paper, we introduce the concept of error bound at infinity for general constraint systems and derive sufficient conditions for such ones have an error bound at infinity. Then we show that the existence of an error bound at infinity is essential to derive upper estimate for the normal cone at infinity of constraint systems. As a byproduct, we obtain new necessary optimality conditions at infinity  for  problems which do not attain its minimum.

The rest of the paper is organized as follows. Section \ref{Preliminaries} contains some  definitions and preliminary results from variational analysis and generalized differentiation. In Section \ref{Section 3}, we derive sufficient conditions for error bounds at infinity.  Section \ref{Section 4} is devoted to upper  estimates of the normal cone of constraint systems and optimality conditions at infinity. 

\section{Preliminaries}\label{Preliminaries}
In this section, we recall some notions related to generalized differentiation from \cite{Kim-Tung-Son-23,Mordukhovich2006,Mordukhovich2018,Penot-2013,Rockafellar1998}. The  space $\R^n$ is equipped with the usual scalar product $\langle \cdot, \cdot\rangle$ and the corresponding  Euclidean norm $\|\cdot\|$. The closed unit ball   and the nonnegative orthant  in $\R^n$ are denoted, respectively, by $\mathbb{B}$ and $\R^n_+$.  The   convex hull of a set $D\subset \R^n$ and the  Euclidean projector of a point $x\in\R^n$ to $D$  are  denoted, respectively, by  $\mathrm{co}\, D$ and $\Pi_D(x)$. As usual, we denote $[\alpha]_+:=\max\,\{\alpha, 0\}$ for any $\alpha\in\R$,  $\overline{\mathbb{R}}:=\R\cup\{\infty\}$, and  $d(x; D)$ is the distance of a point $x\in\R^n$ to $D$, that is, 
$$d(x; D):=\inf\{\|y-x\|\;:\; y\in D\}.$$
Let  $F : \R^n \rightrightarrows \R^m$ be a set-valued mapping. The \textit{domain} and the \textit{graph}  of $F$ are given, respectively, by
$${\rm dom}\,F=\{x\in \R^n \mid F(x)\not= \emptyset\} $$
and
$$ {\rm gph}\,F=\{(x,y)\in \R^n \times \R^m \mid y \in F(x)\}.$$
The set-valued mapping is called  $F$ \textit{proper} if $\dom F \not= \emptyset.$ The \textit{Painlev\'e-Kuratowski outer/upper limit} of $F$ at $\bar x$ is defined by
\begin{align*} 
	\Limsup\limits_{x\rightarrow \bar x} F(x):=\bigg\{ y\in \mathbb{R}^m \mid \exists x_k \rightarrow \bar x, y_k \rightarrow y \ \mbox{with}\ y_k\in F(x_k), \forall k=1,2,....\bigg\}.
\end{align*}
\subsection{Normal cones and Subdifferentials}
\begin{definition}[{see \cite{Mordukhovich2006,Mordukhovich2018}}]{\rm 
		Let $\Omega$ be a nonempty subset of $\mathbb{R}^n$ and $\bar x \in \Omega$. 
		\begin{enumerate}[(i)]
			\item The \textit{regular/Fr\'echet normal cone} to $\Omega$ at $\bar x$ is defined by
			\begin{align*}
				\widehat N(\bar x; \Omega)=\left\{ v\in \mathbb{R}^n\mid \limsup\limits_{x \xrightarrow{\Omega}\bar x} \dfrac{\langle v, x-\bar x \rangle}{\|x-\bar x\|} \leq 0 \right\},
			\end{align*}
			where $x \xrightarrow{\Omega} \bar x$ means that $x \rightarrow \bar x$ and $ x\in \Omega$.
			\item The \textit{limiting/Mordukhovich normal cone} to $\Omega$ at $\bar x$ is given by
			\begin{align*}
				N(\bar x; \Omega)=\Limsup\limits_{ x \xrightarrow{\Omega} \bar x} \widehat{N}(x; \Omega).
			\end{align*}
			When $\bar x \not\in \Omega$, we put $\widehat N(\bar x;\Omega)=N(\bar x;\Omega) =\emptyset$.
	\end{enumerate} }
\end{definition}	
By definition, it is clear that
\begin{align*} 
	\widehat N(x;\Omega) \subset N(x;\Omega),\ \ \   \forall  x \in \Omega.
\end{align*}
Let $f: \mathbb{R}^n\rightarrow \overline{\mathbb{R}}$ be an extend real-valued function. The \textit{effective domain} and the \textit{epigraph} of $f$ are denoted, respectively, by
$$\mbox{dom}\, f:=\{x \in \mathbb{R}^n \mid f(x) < +\infty\}$$ 
and  
$$ {\rm{epi}}\, f:=\{ (x, \alpha) \in \mathbb{R}^n \times \mathbb{R} \mid \alpha \ge f (x)\}.$$  
We say that $f$ is {\em proper}  if its $\mathrm{dom} f$ is  nonempty.

\begin{definition}[{see \cite{Mordukhovich2006,Mordukhovich2018}}]\label{def21} {\rm Consider a function  $f \colon \mathbb{R}^n \to \overline{\mathbb{R}}$ and a point $\bar{x} \in {\rm dom}f$.  
		\begin{enumerate}[{\rm (i)}]
			\item The {\em regular/Fr\'echet subdifferential} of $f$ at $\bar{x}$ is defined by 
			$$
			\widehat{\partial}f(\bar{x}):=\{ v \in \mathbb{R}^n \mid (v,-1)\in \widehat{N}((\bar{x},f(\bar{x}));\mathrm{epi} f)  \}.  
			$$
			\item The {\em limiting/Mordukovich subdifferential} and the {\em limiting/Mordukovich singular subdifferential}  of $f$ at $\bar{x}$ are defined, respectively, by  
			$$
			\partial f(\bar{x}):=\{ v \in \mathbb{R}^n \mid (v,-1)\in {N}((\bar{x},f(\bar{x}));\mathrm{epi} f)  \}, 
			$$
			and   
			$$\partial^{\infty} f(\bar{x}):=\{ v \in \mathbb{R}^n \mid \exists v_k \in 	\widehat{\partial} f(\bar{x}), \lambda_k \downarrow 0, \lambda_k v_k \to v \}.	$$
	\end{enumerate}}
\end{definition}
It is well-known that
$$
\partial f(\bar{x})=\Limsup_{x \xrightarrow{f} \bar{x}}\widehat{\partial}f(x) \supseteq \widehat{\partial}f(x),
$$
where $x \xrightarrow{f} \bar{x}$ means that $x \to \bar{x}$ and $f(x) \to f(\bar{x})$. When the function  $f$ is  convex, then the subdifferentials $\widehat{\partial} f(\bar{x})$ and $\partial f(\bar{x})$ coincide with the subdifferential in the sense of convex analysis. 

For the singular subdifferential, we have
\begin{align*}
	\partial^{\infty} f(\bar{x})\subseteq \{ v \in \mathbb{R}^n \mid (v,0)\in N((\bar{x},f(\bar{x}));\mathrm{epi} f)  \},
\end{align*}
and the inclusion holds with equality whenever $f$ is locally lower semicontinuous (l.s.c.) at $\bar{x}$; see \cite[Theorem 8.9]{Rockafellar1998}. 

Let $\Omega \subset \mathbb{R}^n$. The {\em indicator function} $\delta_{\Omega} \colon \mathbb{R}^n \to \overline{\mathbb{R}}$ of $\Omega$ is defined by
$$\delta_{\Omega}(x) :=
\begin{cases}
	0 & \textrm{ if } x \in \Omega, \\
	+\infty & \textrm{ otherwise.}
\end{cases}$$
It holds that $\partial \delta_{\Omega} (x) =\partial^{\infty} \delta_{\Omega} (x)= N(x; \Omega)$ for any $x \in \Omega$; see \cite{Mordukhovich2006,Mordukhovich2018,Rockafellar1998}.

The following result is the nonsmooth versions of Fermat's rule.
\begin{lemma}[{see \cite[Proposition 1.114]{Mordukhovich2006}}] \label{lema22}
	If a proper function $f \colon \mathbb{R}^n \to \overline{\mathbb{R}}$ has a local minimum at $\bar{x},$ then $0 \in \widehat{\partial}f(\bar{x})\subset \partial f(\bar{x}).$	
\end{lemma}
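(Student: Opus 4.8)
The plan is to obtain the first inclusion $0\in\widehat{\partial}f(\bar{x})$ directly from Definition \ref{def21}(i) — that is, by checking that $(0,-1)$ belongs to the regular normal cone $\widehat{N}\big((\bar{x},f(\bar{x}));\mathrm{epi}\,f\big)$ — and then to read off the second inclusion from the elementary relation $\widehat{N}(\,\cdot\,;\Omega)\subset N(\,\cdot\,;\Omega)$ recorded above (equivalently, from $\widehat{\partial}f(\bar{x})\subset\partial f(\bar{x})$, also noted before the statement). Throughout I take $\bar{x}\in\dom f$, so that $f(\bar{x})\in\mathbb{R}$ and the point $\bar{y}:=(\bar{x},f(\bar{x}))$ is a genuine element of $\mathbb{R}^{n+1}$; this is implicit in the very meaning of $\widehat{\partial}f(\bar{x})$.

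For the first inclusion I would argue as follows. Since $f$ has a local minimum at $\bar{x}$, there is a neighborhood $V$ of $\bar{x}$ with $f(x)\ge f(\bar{x})$ for all $x\in V$. Take any $y=(x,\alpha)\in\mathrm{epi}\,f$ with $x\in V$ and $y\neq\bar{y}$; then $\alpha\ge f(x)\ge f(\bar{x})$, so
$$\frac{\langle (0,-1),\,y-\bar{y}\rangle}{\|y-\bar{y}\|}=\frac{f(\bar{x})-\alpha}{\|y-\bar{y}\|}\le 0.$$
Now let $y\to\bar{y}$ with $y\in\mathrm{epi}\,f$: the first coordinate of $y$ tends to $\bar{x}$, hence lies in $V$ once $y$ is close enough to $\bar{y}$, and the displayed ratio stays $\le 0$ along the whole approach. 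Passing to the upper limit yields
$$\limsup_{y\xrightarrow{\mathrm{epi}\,f}\bar{y}}\frac{\langle (0,-1),\,y-\bar{y}\rangle}{\|y-\bar{y}\|}\le 0,$$
which is exactly the condition defining $(0,-1)\in\widehat{N}(\bar{y};\mathrm{epi}\,f)$; by Definition \ref{def21}(i) this is precisely $0\in\widehat{\partial}f(\bar{x})$.

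The second inclusion is then immediate: applying $\widehat{N}(y;\Omega)\subset N(y;\Omega)$ with $\Omega=\mathrm{epi}\,f$ and $y=\bar{y}$ shows that $(v,-1)\in\widehat{N}(\bar{y};\mathrm{epi}\,f)$ implies $(v,-1)\in N(\bar{y};\mathrm{epi}\,f)$, whence $\widehat{\partial}f(\bar{x})\subset\partial f(\bar{x})$, and combining the two parts gives $0\in\widehat{\partial}f(\bar{x})\subset\partial f(\bar{x})$. I do not anticipate any genuine obstacle here — the whole argument is a single sign estimate on the epigraph. The only points deserving a little care are to restrict the approach to epigraph points whose first coordinate already lies in $V$ before invoking local minimality, and to keep in mind the degenerate case $f(\bar{x})=+\infty$, which is excluded by the standing assumption $\bar{x}\in\dom f$.
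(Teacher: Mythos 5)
Your argument is correct. Note that the paper does not prove this lemma at all --- it is quoted verbatim from \cite[Proposition 1.114]{Mordukhovich2006} --- so there is no in-paper proof to compare against; your direct verification is a perfectly good self-contained substitute. The one-line sign estimate $\langle (0,-1),\,y-\bar y\rangle = f(\bar x)-\alpha\le 0$ for $y=(x,\alpha)\in\mathrm{epi}\,f$ with $x$ near $\bar x$ is exactly what is needed, and you correctly localize the approach so that the first coordinate lies in the minimizing neighborhood $V$ before invoking $f(x)\ge f(\bar x)$; the second inclusion is indeed just $\widehat N(\bar y;\mathrm{epi}\,f)\subset N(\bar y;\mathrm{epi}\,f)$, which the paper records right after the definition of the normal cones. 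The only stylistic difference from the source is that Mordukhovich's own proof runs through the analytic description $\widehat\partial f(\bar x)=\{v: \liminf_{x\to\bar x}\frac{f(x)-f(\bar x)-\langle v,x-\bar x\rangle}{\|x-\bar x\|}\ge 0\}$, whereas you work with the geometric definition via the epigraph normal cone; since that is the definition the paper actually adopts in Definition \ref{def21}, your route is the more natural one here, and the two are equivalent.
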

The next result gives sum rules for limiting and singular subgradients of extended-real-valued functions.
\begin{lemma}[{see \cite[Theorem 3.36]{Mordukhovich2006}}]\label{sum-rule} Let $f_i\colon\mathbb{R}^n\to\overline{\mathbb{R}}$, $i=1, \ldots, m$, $m\geqq 2$, be l.s.c. around $\bar x$ and let all but one of these functions be locally Lipschitz around $\bar x$. Then we have the following inclusions
	\begin{align*}
		&\partial (f_1+\ldots+f_m) (\bar x)\subset \partial  f_1 (\bar x) +\ldots+\partial f_m (\bar x),
		\\
		&\partial^\infty (f_1+\ldots+f_m) (\bar x)\subset \partial^\infty  f_1 (\bar x) +\ldots+\partial^\infty f_m (\bar x).		
	\end{align*}
\end{lemma}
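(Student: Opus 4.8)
The plan is to follow the standard variational route and reduce everything to two summands. First I would induct on $m$: given the inclusions for $m-1$ functions, write $f_1+\ldots+f_m=(f_1+\ldots+f_{m-1})+f_m$, observe that a finite sum of functions locally Lipschitz around $\bar x$ is again locally Lipschitz around $\bar x$, and that the sum of an l.s.c. function with continuous ones is l.s.c. around $\bar x$; hence the two-summand case applies at every step and the resulting inclusions telescope. So it suffices to treat $m=2$ with, say, $f_1$ l.s.c. around $\bar x$ and $f_2$ locally Lipschitz around $\bar x$ with modulus $L$.

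For two functions the engine is the \emph{fuzzy/approximate sum rule} for Fréchet subgradients, which is available because $\R^n$ is an Asplund space: for every $x$ near $\bar x$, every $v\in\widehat\partial(f_1+f_2)(x)$, and every $\eta>0$ there exist $x_1,x_2\in x+\eta\mathbb{B}$ with $|f_i(x_i)-f_i(x)|\le\eta$ and $v_i\in\widehat\partial f_i(x_i)$ such that $\|v_1+v_2-v\|\le\eta$. If one is not permitted to quote this, I would derive it from the smooth variational description of Fréchet subgradients (each $w\in\widehat\partial g(x)$ equals $\nabla\varphi(x)$ for some $\varphi\in C^1$ with $g-\varphi$ attaining a local minimum at $x$), together with Ekeland's variational principle applied in the product space to the decoupled function $(u_1,u_2)\mapsto f_1(u_1)+f_2(u_2)-\langle v,u_1\rangle+\tfrac{1}{2\lambda}\|u_1-u_2\|^2$ near $(x,x)$, and finally Fermat's rule (Lemma~\ref{lema22}) at the resulting approximate minimizers, the Lipschitz continuity of $f_2$ keeping the cross term under control. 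This is the analytic heart of the argument and the one genuinely delicate step; the rest is bookkeeping.

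The passage from the fuzzy rule to the exact limiting rule is then routine. Take $v\in\partial(f_1+f_2)(\bar x)$; since $\partial(f_1+f_2)(\bar x)=\Limsup_{x\to\bar x}\widehat\partial(f_1+f_2)(x)$ there are $x_k\to\bar x$ with $(f_1+f_2)(x_k)\to(f_1+f_2)(\bar x)$ and $v_k\in\widehat\partial(f_1+f_2)(x_k)$, $v_k\to v$. Applying the fuzzy rule at $x_k$ with $\eta_k=1/k$ produces $x_{i,k}\to\bar x$ and $v_{i,k}\in\widehat\partial f_i(x_{i,k})$ with $v_{1,k}+v_{2,k}\to v$; continuity of $f_2$ gives $f_2(x_{2,k})\to f_2(\bar x)$, hence $f_1(x_{1,k})\to f_1(\bar x)$, while $\|v_{2,k}\|\le L$ lets us pass to a subsequence with $v_{2,k}\to v_2\in\partial f_2(\bar x)$ and then $v_{1,k}\to v-v_2=:v_1\in\partial f_1(\bar x)$, so $v\in\partial f_1(\bar x)+\partial f_2(\bar x)$. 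For the singular part, given $v\in\partial^\infty(f_1+f_2)(\bar x)$ with defining sequences $v_k\in\widehat\partial(f_1+f_2)(x_k)$, $\lambda_k\downarrow 0$, $\lambda_k v_k\to v$, the same construction together with $\|\lambda_k v_{2,k}\|\le L\lambda_k\to 0$ gives $\lambda_k v_{1,k}\to v$, so $v\in\partial^\infty f_1(\bar x)$; and since $f_2$ is locally Lipschitz around $\bar x$ one has $\partial^\infty f_2(\bar x)=\{0\}$, whence $v\in\partial^\infty f_1(\bar x)+\partial^\infty f_2(\bar x)$. Iterating the two-summand inclusions along the induction yields the full statement.
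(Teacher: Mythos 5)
The paper does not prove this lemma at all: it is quoted directly from Mordukhovich's Theorem 3.36, whose proof is exactly the route you describe (reduction to two summands by induction, the fuzzy/semi-Lipschitzian sum rule for Fr\'echet subgradients in the Asplund/finite-dimensional setting, and a limiting passage in which the bound $\|v_{2,k}\|\le L$ supplies the compactness and forces $\lambda_k v_{2,k}\to 0$ in the singular case). Your argument is correct and coincides with that standard approach.
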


We now recall the Ekeland variational principle (see \cite{Ekeland-74}).
\begin{lemma}[Ekeland variational principle]\label{lema26} 
	Let $f \colon \mathbb{R}^n \to \overline{\mathbb{R}}$ be a proper l.s.c. function and bounded from below. Let $\epsilon >0$ and $x_0 \in \mathbb{R}^n$ be satisfied
	\begin{eqnarray*}
		f (x_0) &\le& \inf_{x \in \mathbb{R}^n}f (x) + \epsilon. 
	\end{eqnarray*}
	Then, for any $\lambda >0$ there exists  $x_1 \in \mathbb{R}^n$ such that
	\begin{enumerate}[{\rm (i)}]
		\item $f (x_1) \le f(x_0),$
		\item $\|x_1-x_0\| \le \lambda,$ and
		\item $f (x_1) \le  f(x)+\dfrac{\epsilon}{\lambda}\|x-x_1\|$ for all $x \in \mathbb{R}^n.$
	\end{enumerate}
\end{lemma}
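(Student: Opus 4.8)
The plan is to use the fact that the ambient space $\R^n$ is finite-dimensional, so that a single invocation of the Weierstrass theorem does the work that would otherwise require a transfinite/iterative construction. Put $c:=\epsilon/\lambda$ and consider the perturbed function $h\colon\R^n\to\overline{\mathbb{R}}$ given by $h(x):=f(x)+c\,\|x-x_0\|$. First I would record its elementary properties: $h$ is proper (it has the same effective domain as $f$), it is lower semicontinuous (a sum of an l.s.c. function and a continuous one), and it is coercive, because $f$ is bounded below and $\|x-x_0\|\to+\infty$ as $\|x\|\to\infty$. Hence the sublevel sets of $h$ are compact and $h$ attains its infimum over $\R^n$; let $x_1$ be a minimizer. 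Note $f(x_1)\le h(x_1)<+\infty$ and $f(x_1)\ge\inf_{\R^n}f>-\infty$, so $x_1\in\dom f$.

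Next I would verify (i)--(iii) directly from minimality of $x_1$. For (iii): for every $x\in\R^n$ we have $f(x_1)+c\|x_1-x_0\|\le f(x)+c\|x-x_0\|$; combining this with the triangle inequality $\|x-x_0\|\le\|x-x_1\|+\|x_1-x_0\|$ and cancelling the (finite) common term $c\|x_1-x_0\|$ yields $f(x_1)\le f(x)+c\|x-x_1\|$, which is precisely (iii) after restoring $c=\epsilon/\lambda$. For (i): taking $x=x_0$ in the minimality inequality gives $h(x_1)\le h(x_0)=f(x_0)$, and since $h(x_1)=f(x_1)+c\|x_1-x_0\|\ge f(x_1)$ we get $f(x_1)\le f(x_0)$. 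For (ii): the same inequality $f(x_1)+c\|x_1-x_0\|\le f(x_0)$ gives $c\|x_1-x_0\|\le f(x_0)-f(x_1)$, and using $f(x_1)\ge\inf_{\R^n}f\ge f(x_0)-\epsilon$ from the hypothesis on $x_0$ we obtain $c\|x_1-x_0\|\le\epsilon$, that is $\|x_1-x_0\|\le\lambda$.

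The only genuine step is the first one: observing that adding the Lipschitz penalty $c\|x-x_0\|$ turns $f$ into a coercive l.s.c. function, so that its infimum is attained; after that the argument is just bookkeeping with the triangle inequality and the $\epsilon$-suboptimality of $x_0$. I expect no obstacle here beyond making sure the finiteness needed for the cancellations is in place. For completeness I would remark that in a general Banach space compactness fails and one instead introduces the partial order $x\preceq y\iff f(x)+c\|x-y\|\le f(y)$, builds a decreasing chain of nonempty closed sets $A_k:=\{x:x\preceq z_k\}$ with $z_0=x_0$ and $f(z_{k+1})\le\inf_{A_k}f+2^{-k}$, checks that $\mathrm{diam}\,A_k\to0$, and lets $x_1$ be the unique point of $\bigcap_k A_k$; but in the present finite-dimensional setting that machinery is not needed.
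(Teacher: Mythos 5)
Your argument is correct, but note that the paper itself gives no proof of this lemma: it is recalled verbatim from Ekeland's 1974 paper and used as a black box, so the natural comparison is with the classical proof rather than with anything in the text. The classical argument (which you sketch at the end) works in an arbitrary complete metric space: one introduces the order $x\preceq y\iff f(x)+c\, d(x,y)\le f(y)$ and extracts $x_1$ as the limit of an almost-minimizing decreasing chain, using completeness in place of compactness. Your route instead exploits the finite-dimensionality of $\mathbb{R}^n$: the penalized function $h(x)=f(x)+\frac{\epsilon}{\lambda}\|x-x_0\|$ is proper, l.s.c.\ and coercive (here boundedness below of $f$ is essential), so it attains its minimum by Weierstrass, and properties (i)--(iii) then follow from the minimality of $x_1$, the triangle inequality, and the $\epsilon$-suboptimality of $x_0$ exactly as you compute; the finiteness of $f(x_1)$ and of $\|x_1-x_0\|$ that justifies the cancellations is in place since $x_0,x_1\in\mathrm{dom}\, f$. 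This buys a genuinely shorter and more elementary proof at the cost of generality: it would not survive the passage to an infinite-dimensional Banach space (where the paper's introduction briefly sets the stage before specializing to $\mathbb{R}^n$), whereas the order-theoretic proof would. Since the lemma as stated lives in $\mathbb{R}^n$ and is only applied there (in the proof of Theorem 3.2), your proof is fully adequate for the paper's purposes.
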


\subsection{Normal cones and subdifferentials at infinity}
Let $\Omega$ be a {\em locally closed} subset of $\mathbb{R}^n$, i.e., for any $x\in \Omega$ there is a neighborhood $U$ of $x$ such that $\Omega\cap U$ is closed.  Assume that  $\Omega$ is unbounded. 

\begin{definition}[{see \cite{Kim-Tung-Son-23}}]\label{def31} {\rm 
		The {\em norm cone to the set $\Omega$ at infinity} is defined by
		\begin{eqnarray*}
			N(\infty; \Omega) &:=& \Limsup_{ x \xrightarrow{\Omega} \infty} \widehat{N}(x; \Omega).
		\end{eqnarray*} 
}\end{definition}
The following result is  the intersection rule for
normals at infinity.
\begin{proposition}[{see \cite[Proposition 3.7]{Kim-Tung-Son-23}}]\label{intersection-normal-cones}
	Let $\Omega_1, \Omega_2$ be locally closed subsets of $\mathbb{R}^n$ satisfying the normal qualification condition at infinity
	\begin{eqnarray*}
		N(\infty; \Omega_1) \cap \big (-N(\infty; \Omega_2) \big) &=& \{0\}.
	\end{eqnarray*}
	Then we have the inclusion
	\begin{eqnarray*}
		N(\infty; \Omega_1 \cap \Omega_2)  &\subset& N(\infty; \Omega_1)+ N(\infty; \Omega_2).
	\end{eqnarray*}
\end{proposition}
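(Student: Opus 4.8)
The plan is to transplant the classical proof of the intersection rule for the limiting (Mordukhovich) normal cone to the ``at infinity'' setting, the only change being that the base points now escape to infinity rather than converging to a finite point. First I would unwind Definition \ref{def31}: if $v\in N(\infty;\Omega_1\cap\Omega_2)$, there exist $x_k\in\Omega_1\cap\Omega_2$ with $\|x_k\|\to\infty$ and $v_k\in\widehat N(x_k;\Omega_1\cap\Omega_2)$ with $v_k\to v$. Choosing a sequence $\varepsilon_k\downarrow 0$, I would apply at each finite point $x_k$ the fuzzy intersection rule for Fr\'echet normal cones, which holds in $\mathbb{R}^n$ with no qualification condition (it follows from the fuzzy sum rule applied to the indicator functions $\delta_{\Omega_1},\delta_{\Omega_2}$; see \cite{Mordukhovich2006}). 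This yields points $u_{i,k}\in\Omega_i$ with $\|u_{i,k}-x_k\|\le\varepsilon_k$ and vectors $w_{i,k}\in\widehat N(u_{i,k};\Omega_i)$, $i=1,2$, such that $\|v_k-w_{1,k}-w_{2,k}\|\le\varepsilon_k$. Since $\|u_{i,k}\|\ge\|x_k\|-\varepsilon_k\to\infty$, these points head to infinity along $\Omega_i$.

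The hard part will be to show that the sequences $\{w_{1,k}\}$ and $\{w_{2,k}\}$ stay bounded; this is precisely where the normal qualification condition at infinity is used. Arguing by contradiction, set $t_k:=\|w_{1,k}\|+\|w_{2,k}\|$ and assume $t_k\to\infty$ along a subsequence. Because each $\widehat N(u_{i,k};\Omega_i)$ is a cone, the normalized vectors $\bar w_{i,k}:=w_{i,k}/t_k$ still belong to $\widehat N(u_{i,k};\Omega_i)$, and, being bounded, they converge along a further subsequence to some $\bar w_i$ with $\|\bar w_1\|+\|\bar w_2\|=1$. Dividing $\|v_k-w_{1,k}-w_{2,k}\|\le\varepsilon_k$ by $t_k$ and using that $\{v_k\}$ is bounded forces $\bar w_1+\bar w_2=0$, i.e. $\bar w_1=-\bar w_2$. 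Since $\|u_{i,k}\|\to\infty$ and $\bar w_{i,k}\to\bar w_i$, Definition \ref{def31} gives $\bar w_i\in N(\infty;\Omega_i)$, so $\bar w_1\in N(\infty;\Omega_1)\cap\big(-N(\infty;\Omega_2)\big)=\{0\}$; hence $\bar w_1=\bar w_2=0$, contradicting $\|\bar w_1\|+\|\bar w_2\|=1$.

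Once boundedness is established I would pass to subsequences with $w_{i,k}\to w_i$ for $i=1,2$. As $w_{i,k}\in\widehat N(u_{i,k};\Omega_i)$ and $\|u_{i,k}\|\to\infty$, Definition \ref{def31} yields $w_i\in N(\infty;\Omega_i)$; and letting $k\to\infty$ in $\|v_k-w_{1,k}-w_{2,k}\|\le\varepsilon_k$ gives $v=w_1+w_2\in N(\infty;\Omega_1)+N(\infty;\Omega_2)$, which is the claim. The only tool used beyond what is recalled in Section \ref{Preliminaries} is the finite-point fuzzy intersection rule; if one prefers to stay self-contained, it can be replaced by a direct penalization argument based on the Ekeland variational principle (Lemma \ref{lema26}) applied to $d(\cdot;\Omega_1)+d(\cdot;\Omega_2)$ near each $x_k$, which produces the approximate normals $w_{i,k}$ at the cost of some additional routine estimates.
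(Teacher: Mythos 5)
The paper does not actually prove this proposition---it is imported verbatim from \cite[Proposition 3.7]{Kim-Tung-Son-23}---so there is no internal argument to measure yours against; judged on its own, your strategy (localize at the points $x_k\to\infty$ with a fuzzy intersection rule, use the qualification condition at infinity to keep the approximate normals from degenerating, pass to the limit) is the natural one and is essentially sound. The one step you must tighten is the invocation of the fuzzy intersection rule. The version available in \cite[Lemma 3.1]{Mordukhovich2006} reads: for $v_k\in\widehat N(x_k;\Omega_1\cap\Omega_2)$ and $\varepsilon_k>0$ there exist $\lambda_k\ge 0$, $u_{i,k}\in\Omega_i\cap(x_k+\varepsilon_k\mathbb{B})$ and $w_{i,k}\in\widehat N(u_{i,k};\Omega_i)$ with $\|\lambda_k v_k-w_{1,k}-w_{2,k}\|\le\varepsilon_k$ and $\max\{\lambda_k,\|w_{1,k}\|\}=1$; the multiplier $\lambda_k$ cannot be set equal to $1$ a priori. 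Your justification of the $\lambda$-free form---``it follows from the fuzzy sum rule applied to the indicator functions''---is not covered by the sum rule recalled in the paper, since Lemma \ref{sum-rule} requires all but one summand to be Lipschitz and indicator functions are not; the $\lambda$-free statement is in fact true in $\mathbb{R}^n$, but only via a separate decoupling/penalization argument that you would need to supply. The cleanest repair is to use the $\lambda$-version as stated: the normalization $\max\{\lambda_k,\|w_{1,k}\|\}=1$ makes $\{w_{1,k}\}$, and hence $\{w_{2,k}\}$, automatically bounded, so after passing to subsequences $\lambda_k\to\lambda\in[0,1]$, $w_{i,k}\to w_i\in N(\infty;\Omega_i)$ and $\lambda v=w_1+w_2$. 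If $\lambda>0$ you divide by $\lambda$ (the normal cones at infinity are cones) and conclude; if $\lambda=0$ then $w_1=-w_2\in N(\infty;\Omega_1)\cap\bigl(-N(\infty;\Omega_2)\bigr)=\{0\}$, contradicting $\|w_1\|=\lim\|w_{1,k}\|=1$. This is exactly your contradiction argument, relocated to where the degeneracy actually lives; with that adjustment the proof is complete.
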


Now let $f \colon \mathbb{R}^n \to \overline{\mathbb{R}}$ be a l.s.c. function and assume that $f$ is {\em proper at infinity} in the sense that the set $\mathrm{dom} f$ is unbounded.

\begin{definition}[{see \cite{Kim-Tung-Son-23}}]\label{def41} {\rm 
		The {\em limiting/Mordukhovich and the singular subdifferentials} of $f$ at infinity are defined, respectively, by 
		\begin{eqnarray*}
			\partial f(\infty) &:=& \{u \in \mathbb{R}^n \ | \ (u, -1) \in \mathcal{N} \},\\
			\partial^{\infty} f(\infty) &:=& \{u \in \mathbb{R}^n \ | \ (u, 0) \in \mathcal{N}\},
		\end{eqnarray*}
		where $\mathcal{N} := \displaystyle \Limsup_{x \to \infty} {N}((x,f(x)); \textrm{epi} f).$
}\end{definition}
The following result gives limiting representations of limiting and singular subgradients at infinity.
\begin{proposition}[{see \cite[Proposition 4.4]{Kim-Tung-Son-23}}]\label{pro42} 
	The following relationships hold
	\begin{align*}
		\partial f(\infty) &= \Limsup_{x \to \infty} \partial f(x)= \Limsup_{x \to \infty} \widehat{\partial} f(x), 
		\\
		\partial^{\infty} f(\infty) &= \Limsup_{x \to \infty, r \downarrow 0} r \partial f(x)\supseteq \Limsup_{x \to \infty} \partial^{\infty} f(x).  
	\end{align*}
\end{proposition}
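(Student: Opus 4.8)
The plan is to derive both formulas from the classical description of the (limiting) normal cone to an epigraph. Recall from Section~\ref{Preliminaries} (see also \cite[Theorem~8.9]{Rockafellar1998}) that, for an l.s.c.\ function $f$ and $\bar x\in\dom f$, writing $z(\bar x):=(\bar x,f(\bar x))$,
\begin{equation*}
N\big(z(\bar x);\epi f\big)=\bigcup_{\lambda>0}\lambda\,\big\{(v,-1):v\in\partial f(\bar x)\big\}\ \cup\ \big\{(v,0):v\in\partial^\infty f(\bar x)\big\};
\end{equation*}
in particular every $(v,\beta)\in N(z(\bar x);\epi f)$ satisfies $\beta\le 0$, the case $\beta<0$ forcing $v/|\beta|\in\partial f(\bar x)$ and the case $\beta=0$ forcing $v\in\partial^\infty f(\bar x)$. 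Since $\mathcal N=\Limsup_{x\to\infty}N(z(x);\epi f)$, every $w\in\mathcal N$ is a limit of a sequence $w_k=(v_k,\beta_k)\in N(z(x_k);\epi f)$ with $\|x_k\|\to\infty$ (hence $x_k\in\dom f$) and $\beta_k\le 0$.

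For the first identity I would prove the chain of inclusions $\Limsup_{x\to\infty}\widehat\partial f(x)\subset\partial f(\infty)\subset\Limsup_{x\to\infty}\partial f(x)\subset\Limsup_{x\to\infty}\widehat\partial f(x)$, which forces equality throughout. The first inclusion is immediate: if $u_k\in\widehat\partial f(x_k)$, $x_k\to\infty$, $u_k\to u$, then $(u_k,-1)\in\widehat N(z(x_k);\epi f)\subset N(z(x_k);\epi f)$ converges to $(u,-1)$, so $(u,-1)\in\mathcal N$. The second uses the structural lemma: given $u\in\partial f(\infty)$, pick $w_k=(v_k,\beta_k)\to(u,-1)$ as above; then $\beta_k<0$ for large $k$, hence $v_k/|\beta_k|\in\partial f(x_k)$ and $v_k/|\beta_k|\to u$. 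The third uses the finite-point representation $\partial f(x)=\Limsup_{y\xrightarrow{f}x}\widehat\partial f(y)$ together with a diagonal extraction: for $z_k\in\partial f(x_k)$ with $z_k\to u$ and $x_k\to\infty$, choose $y_k$ with $\|y_k-x_k\|<1/k$ and $u_k\in\widehat\partial f(y_k)$ with $\|u_k-z_k\|<1/k$; then $\|y_k\|\ge\|x_k\|-1/k\to\infty$ and $u_k\to u$.

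For the second line I would first establish the equality. If $x_k\to\infty$, $r_k\downarrow 0$, $z_k\in\partial f(x_k)$ and $r_kz_k\to u$, then $(z_k,-1)\in N(z(x_k);\epi f)$ and, normal cones being cones, $r_k(z_k,-1)=(r_kz_k,-r_k)\to(u,0)$ lies in $\mathcal N$, so $u\in\partial^\infty f(\infty)$. Conversely, for $u\in\partial^\infty f(\infty)$ take $w_k=(v_k,\beta_k)\to(u,0)$ in $N(z(x_k);\epi f)$ with $\|x_k\|\to\infty$ and $\beta_k\le 0$: when $\beta_k<0$, set $r_k:=|\beta_k|\downarrow 0$ and $z_k:=v_k/|\beta_k|\in\partial f(x_k)$, so $r_kz_k=v_k\to u$; when $\beta_k=0$, then $v_k\in\partial^\infty f(x_k)$ and one absorbs a diagonal extraction from the outer-limit representation of the singular subdifferential to again produce $r_k\downarrow 0$ and $z_k\in\partial f(x_k)$ with $r_kz_k\to u$. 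In either case $u\in\Limsup_{x\to\infty,\,r\downarrow 0}r\,\partial f(x)$; together with the previous step this gives $\partial^\infty f(\infty)=\Limsup_{x\to\infty,\,r\downarrow 0}r\,\partial f(x)$. The inclusion $\Limsup_{x\to\infty}\partial^\infty f(x)\subset\partial^\infty f(\infty)$ then follows from the same diagonal argument applied to sequences $u_k\in\partial^\infty f(x_k)$ with $u_k\to u$ and $x_k\to\infty$.

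The only genuinely delicate ingredient---and the step I expect to be the main obstacle---is the epigraphical structural lemma, in particular its lower-semicontinuity-dependent part identifying the horizontal normals $(v,0)$ with $\partial^\infty f(\bar x)$ and excluding positive last coordinates. Once that is available, everything else is the bookkeeping above, whose only subtlety is the elementary fact that an $o(1)$ perturbation of a sequence with $\|x_k\|\to\infty$ still escapes to infinity, so that all the diagonal extractions stay ``at infinity''.
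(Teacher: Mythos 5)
This proposition is stated in the paper without proof; it is imported verbatim from \cite[Proposition 4.4]{Kim-Tung-Son-23}, so there is no in-paper argument to compare against. Your proof is correct and is the natural route: everything reduces, exactly as you say, to the epigraphical description $N\big((\bar x,f(\bar x));\epi f\big)=\{\lambda(v,-1):\lambda>0,\ v\in\partial f(\bar x)\}\cup\{(v,0):v\in\partial^{\infty}f(\bar x)\}$ with no positive last components, which for l.s.c.\ $f$ is precisely \cite[Theorem 8.9]{Rockafellar1998} (already invoked in the paper's preliminaries), combined with the finite-point limiting representations $\partial f(\bar x)=\Limsup_{y\xrightarrow{f}\bar x}\widehat{\partial}f(y)$ and $\partial^{\infty}f(\bar x)=\Limsup_{y\xrightarrow{f}\bar x,\,\lambda\downarrow 0}\lambda\widehat{\partial}f(y)$ and routine diagonal extractions that stay at infinity. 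The only bookkeeping point worth making explicit is that in the converse inclusion for $\partial^{\infty}f(\infty)$ one should pass to a subsequence on which either all $\beta_k<0$ or all $\beta_k=0$ before applying the respective case, and in the $\beta_k=0$ case the resulting subgradients live at perturbed points $y_k$ near $x_k$ rather than at $x_k$ itself; as you note, this does not affect the outer limit.
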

\begin{remark}
	{\rm Let $\Omega$ be an unbounded and closed  subset of $\R^n$. Then it follows from Proposition \ref{pro42} and \cite[Proposition 1.19]{Mordukhovich2018} that
		\begin{equation*} 
			\partial\delta_\Omega(\infty)=\partial^\infty\delta_\Omega(\infty)=N(\infty; \Omega).
		\end{equation*}  
		
	}
\end{remark}

We now recall the notion of the Lipschitz property at infinity for l.s.c. functions (see \cite{Kim-Tung-Son-23,Tung-Son-22}). 
\begin{definition}\label{def51}{\rm
		Let  $f \colon \mathbb{R}^n \to \mathbb{R}$ be a l.s.c function.  We say that $f$ is {\em Lipschitz at infinity} if there exist constants $L > 0$ and $R > 0$ such that
		\begin{eqnarray*}
			|f(x) - f(x')| &\le& L \|x - x'\| \ \  \textrm{ for all } \ \ x, x' \in \mathbb{R}^n \setminus \mathbb{B}_R.
		\end{eqnarray*}
}\end{definition}
The following result gives a  necessary and sufficient condition for the  Lipschitz property at infinity of l.s.c. functions. 
\begin{proposition}[{see \cite[Proposition 5.2]{Kim-Tung-Son-23}}]\label{pro52}   
	Let $f \colon \mathbb{R}^n \to \mathbb{R}$ be a l.s.c. function. Then $f$ is Lipschitz at infinity if and only if $\partial^{\infty}f(\infty)=\{0\}.$ In this case, $\partial f(\infty)$ is nonempty compact. 
\end{proposition}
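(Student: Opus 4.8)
The plan is to use the limiting representations of $\partial f(\infty)$ and $\partial^{\infty}f(\infty)$ supplied by Proposition~\ref{pro42} to move the elementary behaviour of the subdifferentials at large finite points to infinity, and back. For the implication ``Lipschitz at infinity $\Rightarrow\partial^{\infty}f(\infty)=\{0\}$'' I would take constants $L,R>0$ as in Definition~\ref{def51}; then $f$ is $L$-Lipschitz on $\{x:\|x\|>R\}$, hence $\partial f(x)\subseteq L\mathbb{B}$ there. If $u\in\partial^{\infty}f(\infty)$, Proposition~\ref{pro42} gives $x_k\to\infty$, $r_k\downarrow0$, $v_k\in\partial f(x_k)$ with $r_kv_k\to u$; eventually $\|x_k\|>R$, so $\|r_kv_k\|\le Lr_k\to0$ and $u=0$. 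Since the origin is a normal to $\epi f$ at every point, $0\in\partial^{\infty}f(\infty)$ always, so $\partial^{\infty}f(\infty)=\{0\}$. For the ``nonempty compact'' claim I would note that $\partial f(\infty)=\Limsup_{x\to\infty}\partial f(x)$ is closed (being a Painlev\'e--Kuratowski upper limit) and contained in $L\mathbb{B}$, hence compact, and it is nonempty because each $\partial f(x_k)$ is nonempty and a bounded selection subconverges into $\partial f(\infty)$.

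For the converse, assume $\partial^{\infty}f(\infty)=\{0\}$. \textbf{Step 1 (localization).} I would first show that $\{x:\partial^{\infty}f(x)\neq\{0\}\}$ is bounded: otherwise pick $x_k$ with $\|x_k\|\ge k$ and, using that $\partial^{\infty}f(x_k)$ is a cone, a unit vector $w_k\in\partial^{\infty}f(x_k)$; a subsequential limit $w$ satisfies $\|w\|=1$ but $w\in\Limsup_{x\to\infty}\partial^{\infty}f(x)\subseteq\partial^{\infty}f(\infty)=\{0\}$ by Proposition~\ref{pro42}, a contradiction. Hence $\partial^{\infty}f(x)=\{0\}$ for all $\|x\|>R_0$ with some $R_0>0$, and then the standard finite-dimensional characterization of local Lipschitz continuity for l.s.c.\ functions (see, e.g., \cite[Theorem~9.13]{Rockafellar1998}) makes $f$ locally Lipschitz on $\{x:\|x\|>R_0\}$. \textbf{Step 2 (uniform modulus).} Next I would bound the moduli uniformly: if $\sup\{\|v\|:v\in\partial f(x),\ \|x\|\ge R_0+1\}=\infty$, choose $x_k$ with $\|x_k\|\ge\max\{k,R_0+1\}$ and $v_k\in\partial f(x_k)$ with $\|v_k\|\ge k$; with $r_k:=\|v_k\|^{-1}\downarrow0$, the unit vectors $r_kv_k$ subconverge to a unit $u\in\Limsup_{x\to\infty,r\downarrow0}r\partial f(x)=\partial^{\infty}f(\infty)=\{0\}$ by Proposition~\ref{pro42}, again a contradiction. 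So $\partial f(x)\subseteq L_0\mathbb{B}$ for all $\|x\|>R_1:=R_0+1$, for some $L_0>0$.

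\textbf{Step 3 (from pointwise bounds to a Lipschitz estimate).} By the mean value inequality for locally Lipschitz functions, $f$ is $L_0$-Lipschitz on every open ball contained in $W:=\{x:\|x\|>R_1\}$, hence $L_0$-Lipschitz along every rectifiable curve lying in $W$. For $n\ge2$ the set $W$ is connected: if $\|x-x'\|\ge1$ one can join $x,x'$ within $W$ by a curve of length at most $\|x-x'\|+\pi R_1$ (rerouting the segment just outside $\mathbb{B}_{R_1}$ when it meets the ball), while if $\|x\|,\|x'\|>R_1+1$ and $\|x-x'\|<1$ the segment $[x,x']$ already lies in $W$; in either case $|f(x)-f(x')|\le L_0(1+\pi R_1)\|x-x'\|$, so $f$ is Lipschitz at infinity with radius $R_1+1$. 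The case $n=1$ is easier and handled directly on the two half-lines (bounding cross-terms by the fixed value of $f$ at reference points). The compactness and nonemptiness of $\partial f(\infty)$ then follow exactly as in the first part.

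I expect the real work to be concentrated in Steps~2 and~3: turning the purely qualitative hypothesis $\partial^{\infty}f(\infty)=\{0\}$ into a \emph{uniform, quantitative} Lipschitz modulus valid on an entire exterior region requires the homogeneity-plus-compactness extraction of Steps~1--2, and then a careful comparison of the intrinsic (length) metric of $W$ with the Euclidean metric, the subtlety being that $W$ is not convex. A lesser but genuine dependence is the appeal in Step~1 to the finite-dimensional equivalence between the triviality of the singular subdifferential and local Lipschitz continuity.
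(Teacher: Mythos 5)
The paper does not prove this proposition at all: it is imported verbatim from \cite[Proposition 5.2]{Kim-Tung-Son-23}, so there is no in-paper argument to compare against. Judged on its own, your proof is essentially correct and is the natural self-contained derivation from the limiting representations in Proposition \ref{pro42} together with the finite-dimensional criterion ``$f$ locally Lipschitz at $x$ $\Leftrightarrow$ $\partial^\infty f(x)=\{0\}$'' from \cite{Rockafellar1998}. Two points deserve tightening. First, in Step 2 the hypothesis $\sup\{\|v\|: v\in\partial f(x),\ \|x\|\ge R_0+1\}=\infty$ does not by itself let you choose witnesses with $\|x_k\|\ge k$; you must first observe that, by Step 1, $f$ is locally Lipschitz on the open set $\{\|x\|>R_0\}$, hence $\partial f$ is uniformly bounded on every compact subset of that set (in particular on each annulus $\{R_0+1\le\|x\|\le R\}$), so any unbounded selection of subgradients is forced to occur along a sequence $x_k\to\infty$ — only then does the rescaling $r_k=\|v_k\|^{-1}$ produce a nonzero element of $\Limsup_{x\to\infty,\,r\downarrow 0} r\,\partial f(x)=\partial^\infty f(\infty)$. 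Alternatively, one can weaken the conclusion to ``$\partial f(x)\subseteq L_0\mathbb{B}$ for $\|x\|>R_1$ with some $R_1$'' and run the contradiction with $R=L=k$; either repair is routine. Second, Step 3 (passing from the pointwise bound $\partial f\subseteq L_0\mathbb{B}$ on $\{\|x\|>R_1\}$ to a genuine Lipschitz estimate on $\{\|x\|>R_1+1\}$ by comparing the intrinsic length metric of the exterior region with the Euclidean metric, with the separate one-dimensional case) is correctly identified as the real geometric content and is handled adequately; the constant $L_0(1+\pi R_1)$ and the enlarged radius $R_1+1$ are exactly what Definition \ref{def51} permits, since it only asks for Lipschitz behaviour outside some ball. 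With these small clarifications the argument is complete, including the compactness and nonemptiness of $\partial f(\infty)$ in the Lipschitz case.
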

The next results present  calculus rules for both basic and singular  subdifferentials at infinity. These results obtained from \cite{Kim-Tung-Son-23} by induction. 
\begin{proposition}[{cf. \cite[Proposition 4.9]{Kim-Tung-Son-23}}] \label{pro54}
	Let $f_1, \ldots, f_m \colon \mathbb{R}^n\to\overline{\mathbb{R}}$ be l.s.c. functions such that the following   qualification condition holds
	\begin{equation}\label{qualification-condition}
		\left[u_1+\ldots+u_m=0, u_i\in\partial^{\infty} f_i(\infty)\right] \Rightarrow u_i=0, i=1, \ldots, m,
	\end{equation}
	Then we have 
	\begin{eqnarray*}
		\partial (f_1 +\ldots+ f_m)(\infty) &\subset& \partial f_1(\infty) +\ldots+ \partial  f_m(\infty), \\
		\partial^{\infty} (f_1 +\ldots+ f_m)(\infty) &\subset& \partial^{\infty} f_1(\infty) + \ldots+ \partial^{\infty} f_m(\infty).   
	\end{eqnarray*}
\end{proposition}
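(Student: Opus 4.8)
The plan is to prove the result by induction on $m$, with the base case $m=2$ being exactly \cite[Proposition 4.9]{Kim-Tung-Son-23}; note that for $m=2$ the qualification condition \eqref{qualification-condition} coincides with the normal qualification condition $\partial^{\infty} f_1(\infty)\cap\big(-\partial^{\infty} f_2(\infty)\big)=\{0\}$ used there. As a preliminary observation I would record that $0\in\partial^{\infty} h(\infty)$ for every l.s.c. $h$ that is proper at infinity: indeed $(0,0)\in\widehat N\big((x,h(x));\epi h\big)\subset N\big((x,h(x));\epi h\big)$ for every $x\in\dom h$, and since $\dom h$ is unbounded one may choose $x_k\to\infty$ in $\dom h$, whence $(0,0)\in\Limsup_{x\to\infty} N\big((x,h(x));\epi h\big)$, i.e. $0\in\partial^{\infty} h(\infty)$.

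For the inductive step, assume $m\geqq 3$ and that the assertion holds for any collection of $m-1$ l.s.c. functions. We may assume $f:=f_1+\ldots+f_m$ is proper at infinity, for otherwise $\dom f$ is bounded, $\partial f(\infty)=\partial^{\infty}f(\infty)=\emptyset$, and both inclusions are trivial. Since $\dom f=\bigcap_{i=1}^m\dom f_i$ is then unbounded, each $\dom f_i$ is unbounded, and so is $\dom g$ where $g:=f_1+\ldots+f_{m-1}$; moreover $g$ is l.s.c. as a finite sum of l.s.c. functions, hence $g$ is l.s.c. and proper at infinity. The crux is to propagate \eqref{qualification-condition} in the two ways the argument requires. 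First, \eqref{qualification-condition} holds for the subfamily $f_1,\ldots,f_{m-1}$: given $u_1+\ldots+u_{m-1}=0$ with $u_i\in\partial^{\infty} f_i(\infty)$, put $u_m:=0\in\partial^{\infty} f_m(\infty)$ and apply \eqref{qualification-condition} to $f_1,\ldots,f_m$ to get $u_i=0$ for all $i$. Hence the inductive hypothesis applies to $g$ and gives $\partial g(\infty)\subset\partial f_1(\infty)+\ldots+\partial f_{m-1}(\infty)$ and $\partial^{\infty} g(\infty)\subset\partial^{\infty} f_1(\infty)+\ldots+\partial^{\infty} f_{m-1}(\infty)$.

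Second, I would check that the pair $(g,f_m)$ satisfies the pairwise qualification condition: if $u+u_m=0$ with $u\in\partial^{\infty} g(\infty)$ and $u_m\in\partial^{\infty} f_m(\infty)$, then by the singular inclusion for $g$ just obtained we may write $u=u_1+\ldots+u_{m-1}$ with $u_i\in\partial^{\infty} f_i(\infty)$, so $u_1+\ldots+u_{m-1}+u_m=0$ and \eqref{qualification-condition} forces $u_i=0$ for all $i=1,\ldots,m$, i.e. $u=0$ and $u_m=0$. Therefore the base case applies to $g$ and $f_m$, yielding $\partial(g+f_m)(\infty)\subset\partial g(\infty)+\partial f_m(\infty)$ and $\partial^{\infty}(g+f_m)(\infty)\subset\partial^{\infty} g(\infty)+\partial^{\infty} f_m(\infty)$. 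Combining these with the inclusions for $g$ from the inductive hypothesis, and recalling $g+f_m=f_1+\ldots+f_m$, completes the induction. I do not expect a genuine obstacle here: the proof is essentially a bookkeeping induction, and the only step needing care is precisely this twofold propagation of \eqref{qualification-condition} — downward to $\{f_1,\ldots,f_{m-1}\}$ and across to the grouped pair $(g,f_m)$ — for which the auxiliary fact $0\in\partial^{\infty} f_i(\infty)$ and the already-established singular sum rule for $g$ are exactly what is needed.
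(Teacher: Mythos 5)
Your proof is correct and matches the paper's intended argument: the paper offers no written proof beyond the remark that these calculus rules are ``obtained from \cite{Kim-Tung-Son-23} by induction,'' and your induction on $m$ from the two-function case of \cite[Proposition 4.9]{Kim-Tung-Son-23} is exactly that argument, carried out carefully. The two points you single out --- that $0\in\partial^{\infty}f_i(\infty)$ lets you pass the qualification condition down to the subfamily, and that the already-established singular sum rule for $g=f_1+\ldots+f_{m-1}$ yields the pairwise qualification condition for $(g,f_m)$ --- are precisely the details the paper leaves implicit.
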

\begin{proposition}[{cf. \cite[Proposition 4.11]{Kim-Tung-Son-23}}] \label{pro-2.15}
	Let $f_1, \ldots, f_m \colon \mathbb{R}^n\to\overline{\mathbb{R}}$ be l.s.c. functions such that the   qualification condition  \eqref{qualification-condition} is satisfied. Then one has the  inclusions
	\begin{eqnarray*}
		\partial (\max\{f_1, \ldots, f_m\})(\infty) &\subset& \bigcup\left\{\sum_{i=1}^m\lambda_i\circ\partial f_i(\infty)\;:\; \lambda\in \Delta_m\right\}, \\
		\partial^{\infty} (\max\{f_1, \ldots, f_m\})(\infty) &\subset& \sum_{i=1}^m \partial^{\infty} f_i(\infty),   
	\end{eqnarray*}
	where $\Delta_m:=\{\lambda\in\R^m_+\;:\; \sum_{i=1}^m\lambda_i=1\}$ and $\lambda_i\circ \partial f_i(\infty)$, $i=1, \ldots, m$, are defined as follow
	\begin{equation*}
		\lambda_i\circ  \partial f_i(\infty)=
		\begin{cases}
			\lambda_i \partial f_i(\infty) \ \ &\text{if}\ \ \lambda_i>0,
			\\
			\partial^{\infty} f_i(\infty) \ \ &\text{if} \ \ \lambda_i=0.
		\end{cases}
	\end{equation*}
\end{proposition}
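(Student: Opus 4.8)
The plan is to argue by induction on $m$, the base case $m=2$ being the two‑function rule \cite[Proposition 4.11]{Kim-Tung-Son-23}. So assume $m\geq 3$ and that the statement holds for any $m-1$ functions. I would set $g:=\max\{f_1,\ldots,f_{m-1}\}$ (again l.s.c.), write $f:=\max\{f_1,\ldots,f_m\}=\max\{g,f_m\}$, apply the two‑function rule to the pair $(g,f_m)$, and then substitute the inductive descriptions of $\partial g(\infty)$ and $\partial^\infty g(\infty)$. (If some domain in sight is bounded, the corresponding subdifferential at infinity is empty and the asserted inclusion is trivial, so there is no harm in assuming all functions appearing below are proper at infinity.)

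The first point to settle is that the qualification condition is inherited along the induction. Taking $u_m=0\in\partial^\infty f_m(\infty)$ in \eqref{qualification-condition} shows that $f_1,\ldots,f_{m-1}$ satisfy the $(m-1)$‑function version of \eqref{qualification-condition}, so the inductive hypothesis applies to $g$; in particular $\partial^\infty g(\infty)\subset\sum_{i=1}^{m-1}\partial^\infty f_i(\infty)$. Feeding this back into \eqref{qualification-condition}, I would verify the two‑function qualification condition for $(g,f_m)$: if $u+u_m=0$ with $u\in\partial^\infty g(\infty)$ and $u_m\in\partial^\infty f_m(\infty)$, then writing $u=u_1+\ldots+u_{m-1}$ with $u_i\in\partial^\infty f_i(\infty)$ gives $u_1+\ldots+u_m=0$, whence all $u_i=0$ and $u=u_m=0$. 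This licenses \cite[Proposition 4.11]{Kim-Tung-Son-23} for $(g,f_m)$.

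That rule gives $\partial^\infty f(\infty)\subset\partial^\infty g(\infty)+\partial^\infty f_m(\infty)\subset\sum_{i=1}^m\partial^\infty f_i(\infty)$, which is the second inclusion, and $\partial f(\infty)\subset\bigcup\{\mu_1\circ\partial g(\infty)+\mu_2\circ\partial f_m(\infty):\mu\in\Delta_2\}$. For the first inclusion I would expand each term according to $\mu_1$: when $\mu_1>0$, insert $\partial g(\infty)\subset\bigcup\{\sum_{i=1}^{m-1}\nu_i\circ\partial f_i(\infty):\nu\in\Delta_{m-1}\}$ from the inductive hypothesis and use that each $\partial^\infty f_i(\infty)$ is a cone to get $\mu_1(\nu_i\circ\partial f_i(\infty))=(\mu_1\nu_i)\circ\partial f_i(\infty)$; when $\mu_1=0$, use $\mu_1\circ\partial g(\infty)=\partial^\infty g(\infty)\subset\sum_{i=1}^{m-1}0\circ\partial f_i(\infty)$ together with $\mu_2=1$. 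In either case the term sits inside a set $\sum_{i=1}^m\lambda_i\circ\partial f_i(\infty)$ with $\lambda_i=\mu_1\nu_i$ for $i\leq m-1$ and $\lambda_m=\mu_2$; since $\sum_i\lambda_i=\mu_1\sum_i\nu_i+\mu_2=1$, one has $\lambda\in\Delta_m$. Taking the union over $\mu\in\Delta_2$ closes the induction.

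I expect the only real obstacle to be the bookkeeping around the symbol $\lambda_i\circ\partial f_i(\infty)$ — which equals $\lambda_i\partial f_i(\infty)$ or $\partial^\infty f_i(\infty)$ according to whether $\lambda_i>0$ — so the degenerate multipliers ($\mu_1=0$, $\mu_2=0$, or some $\nu_i=0$) have to be checked separately, and one must pin down exactly where hypothesis \eqref{qualification-condition} is consumed, namely each time the two‑function rule is invoked in the induction.
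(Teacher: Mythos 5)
Your proof is correct and follows exactly the route the paper intends: the paper gives no written proof of this proposition, stating only that it is ``obtained from \cite{Kim-Tung-Son-23} by induction'' on the two-function rule \cite[Proposition 4.11]{Kim-Tung-Son-23}, which is precisely your induction with base case $m=2$. Your verification that the qualification condition \eqref{qualification-condition} propagates to $(f_1,\dots,f_{m-1})$ and then to the pair $(g,f_m)$, and your bookkeeping for the degenerate multipliers via the cone property of $\partial^\infty f_i(\infty)$, supply the details the paper omits.
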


\begin{remark}
	{\rm By Proposition \ref{pro52}, the   qualification condition \eqref{qualification-condition} holds if  all but one of functions $f_1, \ldots, f_m$ are Lipschitz at infinity. 
		
	}
\end{remark}
\begin{proposition}[{cf. \cite[Proposition 4.12]{Kim-Tung-Son-23}}] \label{pro-2.16} Let $f_1, \ldots, f_m \colon \mathbb{R}^n\to\overline{\mathbb{R}}$ be l.s.c. functions. Then we have
	\begin{equation*}
		\partial (\min\{f_1, \ldots, f_m\})(\infty)\subset \bigcup \{\partial f_i(\infty)\;:\; i= 1, \ldots, m\}. 
	\end{equation*}
	
\end{proposition}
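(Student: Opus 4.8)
The plan is to lift the elementary calculus rule for the regular subdifferential of a pointwise minimum to the setting at infinity, exploiting the limiting representation $\partial f(\infty)=\Limsup_{x\to\infty}\widehat\partial f(x)$ from Proposition~\ref{pro42}. Throughout put $f:=\min\{f_1,\dots,f_m\}$; it is l.s.c., being a minimum of finitely many l.s.c. functions, and one has the set identities $\epi f=\bigcup_{i=1}^m\epi f_i$ and $\dom f=\bigcup_{i=1}^m\dom f_i$.

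First I would record a fact at finite points. Since the regular normal cone is antitone in its set argument — $A\subset B$ and $z\in A$ imply $\widehat N(z;B)\subset\widehat N(z;A)$, which is immediate from the definition of $\widehat N$ — and $\epi f_i\subset\epi f$ for every $i$, we get $\widehat N(z;\epi f)\subset\widehat N(z;\epi f_i)$ whenever $z\in\epi f_i$. Now fix $x\in\dom f$ and set $I(x):=\{i\in\{1,\dots,m\}\;:\;f_i(x)=f(x)\}$, which is nonempty because the minimum $f(x)=\min_i f_i(x)$ is attained. For each $i\in I(x)$ the point $z=(x,f(x))=(x,f_i(x))$ belongs to $\epi f_i$, so $(v,-1)\in\widehat N((x,f(x));\epi f)$ implies $(v,-1)\in\widehat N((x,f_i(x));\epi f_i)$; by Definition~\ref{def21} this means exactly that $\widehat\partial f(x)\subset\widehat\partial f_i(x)$ for every $i\in I(x)$. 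In particular, there is at least one index $i$ with $\widehat\partial f(x)\subset\widehat\partial f_i(x)$.

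Then, to prove the inclusion in the proposition, I would take an arbitrary $u\in\partial(\min\{f_1,\dots,f_m\})(\infty)=\partial f(\infty)$. By Proposition~\ref{pro42} there are sequences $x_k\to\infty$ and $u_k\to u$ with $u_k\in\widehat\partial f(x_k)$, and necessarily $x_k\in\dom f$. For each $k$ pick $i_k\in I(x_k)$; by the finite-point fact, $u_k\in\widehat\partial f_{i_k}(x_k)$. Since the indices $i_k$ lie in the finite set $\{1,\dots,m\}$, some value $i_0$ occurs along a subsequence $(k_j)$. Along this subsequence $x_{k_j}\to\infty$, $u_{k_j}\to u$, and $u_{k_j}\in\widehat\partial f_{i_0}(x_{k_j})$; moreover $f_{i_0}(x_{k_j})=f(x_{k_j})<+\infty$ shows that $\dom f_{i_0}$ is unbounded, so that $\partial f_{i_0}(\infty)$ is well defined and, again by Proposition~\ref{pro42},
\[
u\ \in\ \Limsup_{x\to\infty}\widehat\partial f_{i_0}(x)\ =\ \partial f_{i_0}(\infty)\ \subset\ \bigcup_{i=1}^m\partial f_i(\infty),
\]
which is the asserted inclusion.

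I do not anticipate a serious obstacle. The epigraphical decomposition $\epi f=\bigcup_i\epi f_i$ makes the regular subdifferential estimate at finite points immediate and, unlike the sum and max rules (Propositions~\ref{pro54} and~\ref{pro-2.15}), requires no qualification condition; the only mildly delicate point is the extraction, before the limiting passage at infinity, of a subsequence along which the active index $i_k$ is constant. Alternatively, one could run the argument entirely on normal cones, combining $\epi f=\bigcup_i\epi f_i$ with the union rule $N(z;\bigcup_i\Omega_i)\subset\bigcup_i N(z;\Omega_i)$ valid for locally closed sets.
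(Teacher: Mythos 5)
Your proof is correct. Note, however, that the paper does not actually prove Proposition \ref{pro-2.16}: it imports the two-function case from \cite[Proposition 4.12]{Kim-Tung-Son-23} and asserts the general case ``by induction,'' so there is no in-paper argument to compare against line by line. Your route is a genuinely self-contained alternative: you work directly with the decomposition $\epi f=\bigcup_{i=1}^m\epi f_i$, use the antitonicity of the regular normal cone to get $\widehat\partial f(x)\subset\widehat\partial f_{i}(x)$ for every active index $i\in I(x)$, and then pass to infinity via the representation $\partial f(\infty)=\Limsup_{x\to\infty}\widehat\partial f(x)$ of Proposition \ref{pro42}, extracting by pigeonhole a subsequence with constant active index before taking the limit. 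All the delicate points are handled: $x_k\in\dom f$ is forced by $\widehat\partial f(x_k)\neq\emptyset$, the subsequence extraction is the right (and necessary) step, and you verify that $\dom f_{i_0}$ is unbounded so that $\partial f_{i_0}(\infty)$ is well defined. What your approach buys over the paper's citation-plus-induction is transparency and the explicit confirmation that, unlike Propositions \ref{pro54} and \ref{pro-2.15}, no qualification condition of the form \eqref{qualification-condition} is needed for the min rule; the cost is only that you reprove at infinity what the reference already packages. The alternative you sketch at the end (running the argument on normal cones with the union rule for $N(z;\bigcup_i\Omega_i)$) is also viable but adds nothing over the epigraphical version.
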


\section{Sufficient conditions for error bounds at infinity} \label{Section 3}
Let $\Omega$ be a nonempty and closed subset in $\mathbb{R}^n$ and $g\colon\mathbb{R}^n\to\overline{\mathbb{R}}$ be a l.s.c. function. Consider the constraint set
\begin{equation}\label{constraint-set}
	S:=\{x\in \Omega\;:\; g(x)\leq 0\}.	
\end{equation}
Assume that $\Omega\cap\dom g$ is nonempty and unbounded. 

Our main purpose in this section is to derive sufficient conditions for the existence of error bounds at infinity for the constraint set $S$ in \eqref{constraint-set}.  
\begin{definition}\rm 
	We say that the constraint set $S$ has an error bound at infinity if there exist $\alpha>0$ and $R>0$ such that
	\begin{equation*}
		d(x; S)\leq \alpha [g(x)]_+
	\end{equation*}
	for all $x\in \Omega$ with $\|x\|>R$.
\end{definition}
\begin{theorem}\label{theorem-3.2} Assume that 
	\begin{equation}\label{CQ-infinity}
		\partial^\infty g(\infty)\cap(-N(\infty; \Omega))=\{0\}
	\end{equation}
	and 
	\begin{equation}\label{regular-infinity}
		0\notin \partial g(\infty)+N(\infty; \Omega).
	\end{equation}
	Then the constraint set $S$  in \eqref{constraint-set} has an error bound at infinity. 
\end{theorem}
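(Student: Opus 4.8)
The plan is to argue by contradiction and manufacture, out of a hypothetical failure of the error bound, a sequence tending to infinity along which the subdifferential of the penalized constraint function $h:=[g]_++\delta_\Omega$ becomes arbitrarily small; the two qualification conditions \eqref{CQ-infinity} and \eqref{regular-infinity} will then each be contradicted. So suppose $S$ has no error bound at infinity (we assume, as is implicit, that $S\neq\emptyset$; otherwise $d(\,\cdot\,;S)\equiv+\infty$ and the asserted inequality cannot hold for finite values of $g$). Then for every $k\in\N$ there is $x_k\in\Omega$ with $\|x_k\|>k$ and $d(x_k;S)>k\,[g(x_k)]_+$. Since $S\neq\emptyset$, the number $\rho_k:=d(x_k;S)$ is finite, so $[g(x_k)]_+<+\infty$ and $x_k\notin S$, hence $g(x_k)>0$ and $[g(x_k)]_+=g(x_k)=:\epsilon_k$. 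The crucial elementary remark is that, fixing any $s_0\in S$, we have $\rho_k\le\|x_k-s_0\|\le\|x_k\|+\|s_0\|$, whence $\epsilon_k<\rho_k/k\le(\|x_k\|+\|s_0\|)/k$, and in particular $\epsilon_k/\|x_k\|\to0$.

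First I would apply the Ekeland variational principle (Lemma~\ref{lema26}) to the proper l.s.c.\ function $h=[g]_++\delta_\Omega$, which has $\inf h=0$ (attained on $S$), at the point $x_k$, taking accuracy $\epsilon_k=h(x_k)$ and radius $\lambda_k:=\min\{\rho_k/2,\sqrt{\epsilon_k\|x_k\|}\}$. This yields $y_k$ with $h(y_k)\le h(x_k)$, $\|y_k-x_k\|\le\lambda_k$, and $y_k$ a global minimizer of $x\mapsto h(x)+\frac{\epsilon_k}{\lambda_k}\|x-y_k\|$. The choice of $\lambda_k$ is tailored so that three things hold at once: the coefficient $c_k:=\epsilon_k/\lambda_k$ tends to $0$ (it equals $\sqrt{\epsilon_k/\|x_k\|}$ or is at most $2\epsilon_k/\rho_k<2/k$); $\|y_k-x_k\|\le\rho_k/2$, so $d(y_k;S)\ge\rho_k/2>0$ and therefore $y_k\in\Omega\setminus S$ with $g(y_k)>0$; and $\lambda_k\le\sqrt{\epsilon_k\|x_k\|}\le\|x_k\|/2$ for large $k$, so $\|y_k\|\ge\|x_k\|/2\to\infty$. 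Since $g$ is l.s.c.\ and $g(y_k)>0$, we have $[g]_+=g$ near $y_k$, so $h=g+\delta_\Omega$ there. Combining Fermat's rule (Lemma~\ref{lema22}), Lemma~\ref{sum-rule} to strip off the globally Lipschitz summand $\frac{\epsilon_k}{\lambda_k}\|\,\cdot\,-y_k\|$ (whose subdifferential at $y_k$ is $c_k\mathbb{B}$), and the Mordukhovich sum rule \cite[Theorem~3.36]{Mordukhovich2006} for $g+\delta_\Omega$ under the qualification condition $\partial^\infty g(y_k)\cap(-N(y_k;\Omega))=\{0\}$ (recall $\partial^\infty\delta_\Omega(y_k)=N(y_k;\Omega)$), one gets $a_k\in\partial g(y_k)$ and $b_k\in N(y_k;\Omega)$ with $\|a_k+b_k\|\le c_k\to0$.

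The remaining step is the limit passage, where \eqref{CQ-infinity} is invoked twice. First, the qualification condition $\partial^\infty g(y_k)\cap(-N(y_k;\Omega))=\{0\}$ used above holds for all large $k$: if it failed along a subsequence, there would be unit vectors $w_k\in\partial^\infty g(y_k)\cap(-N(y_k;\Omega))$ (these sets being cones), and any cluster point $w$ of $(w_k)$ would be a unit vector in $\partial^\infty g(\infty)\cap(-N(\infty;\Omega))$ — the first membership from the inclusion $\partial^\infty g(\infty)\supseteq\Limsup_{x\to\infty}\partial^\infty g(x)$ in Proposition~\ref{pro42}, and the second from Definition~\ref{def31} via a diagonal argument that replaces each $y_k$ by a point of $\Omega$ within $1/k$ of it bearing a Fr\'echet normal within $1/k$ of $-w_k$ — contradicting \eqref{CQ-infinity}. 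The same devices show $(a_k)$ is bounded: otherwise, with $r_k:=1/\|a_k\|\downarrow0$, some subsequence of $(r_ka_k)$ converges to a unit vector $a\in\partial^\infty g(\infty)$ (using $\partial^\infty g(\infty)=\Limsup_{x\to\infty,\,r\downarrow0}r\,\partial g(x)$ from Proposition~\ref{pro42}), while $r_kb_k=r_k(a_k+b_k)-r_ka_k\to-a$ forces $-a\in N(\infty;\Omega)$, again contradicting \eqref{CQ-infinity}. Hence $(a_k)$ is bounded and so is $(b_k)$ (since $b_k=(a_k+b_k)-a_k$); along a subsequence $a_k\to a$, $b_k\to b$, and $\|a_k+b_k\|\to0$ forces $b=-a$. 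Then $a\in\partial g(\infty)$ (from $\partial g(\infty)=\Limsup_{x\to\infty}\partial g(x)$, Proposition~\ref{pro42}) and $-a\in N(\infty;\Omega)$ (the diagonal argument once more), so $0=a+(-a)\in\partial g(\infty)+N(\infty;\Omega)$, which contradicts \eqref{regular-infinity} and completes the proof.

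The main obstacle, around which the whole construction is organized, is the calibration of the Ekeland radius $\lambda_k$: it must be small enough for $y_k$ to stay at infinity and to miss $S$, yet large enough for the slope $\epsilon_k/\lambda_k$ to vanish. These competing requirements can be met only because $S\neq\emptyset$ gives $\rho_k=d(x_k;S)\le\|x_k\|+\|s_0\|$, hence $\epsilon_k=o(\|x_k\|)$ — this is the decisive quantitative input, and $\lambda_k=\min\{\rho_k/2,\sqrt{\epsilon_k\|x_k\|}\}$ is designed precisely to exploit it. A secondary, more routine issue is that the subdifferential sum rule is applied at the finite points $y_k$, so \eqref{CQ-infinity} must first be transferred from infinity down to those points; the normalization-and-diagonal argument used in the limit passage takes care of this.
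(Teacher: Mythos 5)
Your proof is correct and follows essentially the same route as the paper's: contradiction, Ekeland's variational principle applied to $[g]_++\delta_\Omega$, Fermat's rule plus the two sum rules (the Lipschitz one for the norm term and the qualified one for $g+\delta_\Omega$, justified by transferring \eqref{CQ-infinity} to finite points via normalization), and the final two-case bounded/unbounded analysis contradicting \eqref{regular-infinity} and \eqref{CQ-infinity} respectively. The only difference is your calibration of the Ekeland radius ($\lambda_k=\min\{\rho_k/2,\sqrt{\epsilon_k\|x_k\|}\}$ versus the paper's $\lambda_k=k\epsilon_k$ with the stronger negation $d(x_k;S)>k^2[g(x_k)]_+$); both choices secure the same three properties of $y_k$, so this is a cosmetic variation.
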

\begin{proof} Suppose on the contrary that $S$ has no error bound at infinity. Then, for each $k\in\mathbb{N}$, there exists $x_k\in \Omega$ such that $\|x_k\|\to \infty$ as $k\to\infty$ and
	\begin{equation}\label{equa-5}
		d(x_k; S)>k^2[g(x_k)]_+.
	\end{equation}  
	This implies that $x_k\notin S$ for all $k\in\N$. Hence, $\epsilon_k:=[g(x_k)]_+=g(x_k)>0$  and 
	\begin{equation*}
		[g(x_k)]_+\leq \inf_{y\in\Omega}[g(y)]_++\epsilon_k
	\end{equation*}
	for all $k\in\mathbb{N}$. By the closedness of $\Omega$ and the Ekeland variational principle (see Lemma \ref{lema26}), for each $k\in\mathbb{N}$ and $\lambda_k=k \epsilon_k>0$, there exists $y_k\in\Omega$ such that
	\begin{align}
		&\|y_k-x_k\|\leq \lambda_k,\label{equ_3.6} 
		\\
		& [g(y_k)]_+\leq [g(y)]_++\frac{1}{k}\|y-y_k\| \ \ \text{for all}\ \ y\in\Omega.\label{equ_3.7}
	\end{align}  
	It follows from \eqref{equa-5} that
	\begin{equation}\label{equa-6}
		\lambda_k<\frac{1}{k} d(x_k; S)\leq d(x_k; S).
	\end{equation}
	This implies that $y_k\notin S$ and so $g(y_k)>0$. Indeed, if otherwise, then 
	\begin{equation*}
		\|y_k-x_k\|\geq d(x_k; S)>\lambda_k,
	\end{equation*}
	a contradiction. Now fixed $\bar x\in S$, then by \eqref{equ_3.6} and \eqref{equa-6}, one has 
	\begin{align*}
		\|y_k\|\geq \|x_k\|-\|y_k-x_k\|\geq \|x_k\|-\lambda_k&>\|x_k\|-\frac{1}{k} d(x_k; S)
		\\
		&\geq \|x_k\|-\frac{1}{k}\|x_k-\bar x\|
		\\
		&=\|x_k\|\left(1-\frac{1}{k}\left\|\frac{x_k}{\|x_k\|}-\frac{\bar x}{\|x_k\|}\right\|\right)
	\end{align*}  
	and so $\|y_k\|\to \infty$ as $k\to\infty$. On the other hand, by \eqref{equ_3.7} $y_k$ is a global minimizer of the l.s.c. function $[g(\cdot)]_++\delta_{\Omega}(\cdot)+\frac{1}{k}\|\cdot-y_k\|$  on $\R^n$. By the l.s.c. property of $g$ and the fact that $g(y_k)>0$ there exists a neighborhood of $y_k$ on which we have
	$$[g(\cdot)]_++\delta_{\Omega}(\cdot)+\frac{1}{k}\|\cdot-y_k\|=g(\cdot)+\delta_{\Omega}(\cdot)+\frac{\epsilon_k}{\lambda_k}\|\cdot-y_k\|.$$
	By Fermat rule (Lemma \ref{lema22}), we obtain
	\begin{equation*}
		0\in \partial \left(g(\cdot)+\delta_{\Omega}(\cdot)+\frac{1}{k}\|\cdot-y_k\|\right)(y_k).
	\end{equation*}
	By the Lipschitz property of the function $\|\cdot-y_k\|$ and the sum rule (Lemma \ref{sum-rule}), one gets
	\begin{equation}\label{equa-7}
		0\in \partial (g(\cdot)+\delta_{\Omega}(\cdot))(y_k)+\frac{1}{k}\partial(\|\cdot-y_k\|)(y_k).
	\end{equation}
	We now show that condition \eqref{CQ-infinity} implies that there exists $R>0$ such that
	\begin{equation}\label{equa-8}
		\partial^\infty g(x)\cap (-N(x; \Omega))=\{0\} \ \ \text{when}\ \ \|x\|>R.
	\end{equation} 
	Indeed, if otherwise, there exist sequences $z_k\in\R^n$ and $u_k\in \partial^\infty g(z_k)\cap (-N(z_k; \Omega))$ such that $\|z_k\|\to\infty$ and $u_k\neq 0$ for all $k\in\N$. Clearly, 
	$$\frac{u_k}{\|u_k\|}\in \partial^\infty g(z_k)\cap (-N(z_k; \Omega)).$$ 
	By passing to a subsequence if necessary we may assume that $\frac{u_k}{\|u_k\|}$ converges to some $u$ with $\|u\|=1$. Thus,  by Proposition \ref{pro42} we get $u\in \partial^\infty g(\infty)\cap (-N(\infty; \Omega))$, a contradiction. 
	
	By \eqref{equa-8}, \eqref{equa-7} and the sum rule, we have
	\begin{equation}\label{equa-9}
		0\in \partial g(y_k)+\partial \delta_\Omega(y_k)+\frac{1}{k}\partial(\|\cdot-y_k\|)(y_k).
	\end{equation}
	We note here that $\partial \delta_\Omega(y_k)=N(y_k; \Omega)$ and  $\partial(\|\cdot-y_k\|)(y_k)=\mathbb{B}$.  Thus \eqref{equa-9} means that 
	\begin{equation*}
		0\in \partial g(y_k)+N(y_k; \Omega)+\frac{1}{k}\mathbb{B}.
	\end{equation*}
	Hence, for each $k\in\N$ there exist $u_k\in\partial g(y_k)$ and $v_k\in N(y_k; \Omega)$ such that
	\begin{equation}\label{equa-10}
		\|u_k+v_k\|\leq \frac{1}{k}
	\end{equation}
	and so $\lim_{k\to\infty}(u_k+v_k)=0$. 
	
	We have the following two cases.
	
	{\em Case 1. The sequence $u_k$ is bounded}. Then by \eqref{equa-10}, the sequence $v_k$ is also bounded. By passing to a subsequence if necessary we may assume that $u_k\to u$ and $v_k\to v$ as $k\to\infty$. Thus we arrive at
	\begin{equation*}
		u\in\partial g(\infty), v\in N(\infty; \Omega) \ \ \text{and} \ \ u+v=0,
	\end{equation*} 
	which contradicts \eqref{regular-infinity}.  
	
	{\em Case 2. The sequence $u_k$ is unbounded}. By passing to a subsequence if necessary we may assume that $u_k\to\infty$ as $k\to\infty$ and so is $v_k$. Furthermore, by \eqref{equa-10} we have
	\begin{equation*}
		\|v_k\|\leq \|u_k+v_k\|+\|u_k\|\leq \frac{1}{k}+\|u_k\|.
	\end{equation*}
	Hence,
	\begin{equation*}
		\frac{\|v_k\|}{\|u_k\|}\leq 1+\frac{1}{k\|u_k\|}.
	\end{equation*}
	This implies that the sequence $\frac{v_k}{\|u_k\|}$ is bounded and $\frac{v_k}{\|u_k\|}\in N(y_k;\Omega)$ for all $k\in \N$. Hence, by passing to a subsequence if necessary we may assume that $\frac{u_k}{\|u_k\|}\to u$ and $\frac{v_k}{\|u_k\|}\to v$ as $k\to\infty$. By Proposition \ref{pro42}, $u\in \partial^\infty g(\infty)$.  Clearly, $u\neq 0$, $v\in N(\infty; \Omega)$ and $u+v=0$, which contradicts \eqref{CQ-infinity}. The proof is complete.
\end{proof}
\begin{remark}\rm  By Proposition \ref{pro52}, the condition \eqref{CQ-infinity} holds automatically when $g$ is Lipschitz at infinity or $\Omega=\R^n$.
\end{remark}
The following simple example is to illustrate Theorem  \ref{theorem-3.2}.
\begin{example}\rm  Consider the constraint \eqref{constraint-set} with $\Omega=\R^2$, $g(x, y)=x^2+y^2$ for all $(x, y)\in \R^2$. Then $S=\{(0,0)\}$.  It is easy to see that $\partial g(\infty)=\emptyset$. Hence, by Theorem \ref{theorem-3.2}, $S$ has an error bound at infinity. However, we can check that $S$ has no error bound of Hoffman's type, that is, there exists $\tau>0$ such that
	\begin{equation}\label{Hoffman}
		d((x,y); S)\leq \tau [g(x,y)]_+ \ \ \forall (x,y)\in\R^2;
	\end{equation}
	see, for example, \cite{Hoffman_52,Luo,Ngai-2005}. Indeed, if otherwise, let $(x_k, y_k)=(0,\frac{1}{k})$. Then by \eqref{Hoffman}, we obtain 
	$$\frac{1}{k}\leq \tau\frac{1}{k^2} \ \ \forall k\in\N,$$
	a contradiction.
\end{example}

We now apply Theorem \ref{theorem-3.2} to  constraint systems described not by a single inequality but possibly by finitely many inequalities.
\begin{theorem} Let $S$ be a constraint set defined by
	\begin{equation}\label{equa-21}
		S:=\{x\in \Omega\;:\; g_i(x)\leq 0, i=1, \ldots, m\}
	\end{equation} 
	where $g_i\colon\R^n\to\R$, $i\in I:=\{1, \ldots, m\}$, are l.s.c. functions, $\Omega$ is an unbounded closed subset in $\R^n$ such that $\Omega\cap (\cap_{i\in I}\,\dom\, g_i)$ is unbounded. If the following conditions hold
	\begin{equation}\label{equa-11}
		\left[u_1+\ldots+u_m+v=0, u_i\in\partial^{\infty} g_i(\infty), v\in N(\infty; \Omega)\right] \Rightarrow u_i=v=0 \ \ \forall i\in I,
	\end{equation} 
	and  
	\begin{equation}\label{equa-12}
		\nexists \lambda\in\Delta_m \ \ \text{such that}\ \   0\in \sum_{i=1}^m\lambda_i\circ\partial g_i(\infty) +N(\infty; \Omega)
	\end{equation}
	then $S$ has an error bound at infinity, i.e., there exist $\alpha>0$ and $R>0$ such that
	\begin{equation}\label{equa-14}
		d(x; S)\leq \alpha\sum_{i=1}^m[g_i(x)]_+ \ \ \forall x\in\Omega\ \ \text{with}\ \ \|x\|>R.
	\end{equation}
\end{theorem}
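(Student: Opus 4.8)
The plan is to reduce the system of finitely many inequalities to a single inequality and then invoke Theorem~\ref{theorem-3.2}. Set $g := \max\{g_1, \ldots, g_m\}$, which is a real-valued l.s.c.\ function on $\R^n$, and observe that $S = \{x \in \Omega : g(x) \le 0\}$ while $[g(x)]_+ = \max_{i \in I}[g_i(x)]_+$, so that
\[
[g(x)]_+ \ \le\ \sum_{i=1}^m [g_i(x)]_+ \ \le\ m\,[g(x)]_+ .
\]
Consequently, an error bound at infinity for $S$ expressed through $[g(\cdot)]_+$ is equivalent, up to a change of the constant, to the desired estimate \eqref{equa-14}. Since $\Omega\cap\dom g = \Omega\cap\bigcap_{i\in I}\dom g_i$ is unbounded, it remains to check that the pair $(g,\Omega)$ satisfies hypotheses \eqref{CQ-infinity} and \eqref{regular-infinity} of Theorem~\ref{theorem-3.2}.

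First I would note that \eqref{equa-11}, applied with $v=0$, yields the qualification condition \eqref{qualification-condition} for the family $g_1, \ldots, g_m$, so Proposition~\ref{pro-2.15} is available. To verify \eqref{CQ-infinity}: if $u \in \partial^\infty g(\infty) \cap (-N(\infty;\Omega))$, then by Proposition~\ref{pro-2.15} one may write $u = u_1 + \ldots + u_m$ with $u_i \in \partial^\infty g_i(\infty)$, while $v := -u \in N(\infty;\Omega)$; hence $u_1 + \ldots + u_m + v = 0$, and \eqref{equa-11} forces all $u_i = 0$ and $v=0$, i.e.\ $u = 0$. To verify \eqref{regular-infinity}: suppose $0 \in \partial g(\infty) + N(\infty;\Omega)$, say $0 = u + v$ with $u \in \partial g(\infty)$ and $v \in N(\infty;\Omega)$. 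By Proposition~\ref{pro-2.15} there is $\lambda \in \Delta_m$ with $u \in \sum_{i=1}^m \lambda_i\circ\partial g_i(\infty)$, whence $0 = u + v \in \sum_{i=1}^m \lambda_i\circ\partial g_i(\infty) + N(\infty;\Omega)$, contradicting \eqref{equa-12}. Thus both hypotheses of Theorem~\ref{theorem-3.2} hold.

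With \eqref{CQ-infinity} and \eqref{regular-infinity} in hand, Theorem~\ref{theorem-3.2} provides $\alpha > 0$ and $R > 0$ such that $d(x;S) \le \alpha\,[g(x)]_+$ for all $x \in \Omega$ with $\|x\| > R$; combining this with $[g(x)]_+ \le \sum_{i=1}^m [g_i(x)]_+$ gives \eqref{equa-14}. I do not anticipate a serious obstacle here: the only point demanding care is the bookkeeping around Proposition~\ref{pro-2.15}, namely checking that the convention for $\lambda_i\circ\partial g_i(\infty)$ (the switch to the singular subdifferential when $\lambda_i = 0$) is exactly the one appearing in \eqref{equa-12}, and that the qualification condition \eqref{qualification-condition} needed to apply Proposition~\ref{pro-2.15} is genuinely weaker than \eqref{equa-11}; the rest is routine.
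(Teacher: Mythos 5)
Your proof is correct and follows essentially the same route as the paper's: reduce to the single l.s.c.\ function $g=\max\{g_1,\ldots,g_m\}$, use \eqref{equa-11} to get the qualification condition \eqref{qualification-condition}, apply Proposition~\ref{pro-2.15} together with \eqref{equa-12} to rule out $0\in\partial g(\infty)+N(\infty;\Omega)$, invoke Theorem~\ref{theorem-3.2}, and finish with $[g(x)]_+\le\sum_{i=1}^m[g_i(x)]_+$. In fact you are slightly more careful than the paper, which never explicitly checks the hypothesis \eqref{CQ-infinity} of Theorem~\ref{theorem-3.2}; your verification of it via the singular-subdifferential inclusion of Proposition~\ref{pro-2.15} and condition \eqref{equa-11} is exactly the missing bookkeeping.
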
  
\begin{proof} Let $g$ be the maximum function defined by $g(x):=\max\,\{g_i(x)\;:\; i\in I\}$. Then 
	$$S=\{x\in\Omega\;:\; g(x)\leq 0\}.$$
	It follows from \eqref{equa-11} that
	\begin{equation*} 
		\left[u_1+\ldots+u_m=0, u_i\in\partial^{\infty} g_i(\infty)\right] \Rightarrow u_i=0 \ \ \forall i\in I.
	\end{equation*} 
	By Proposition \ref{pro-2.15}, one has
	\begin{equation*}
		\partial g(\infty) \subset \bigcup\left\{\sum_{i=1}^m\lambda_i\circ\partial f_i(\infty)\;:\; \lambda\in \Delta_m\right\}
	\end{equation*}
	This and \eqref{equa-12} imply that $0\notin \partial g(\infty) +N(\infty; \Omega)$. Thus, by  Theorem \ref{theorem-3.2}, there exist $\alpha>0$ and $R>0$ such that
	\begin{equation}\label{equa-15}
		d(x; S)\leq \alpha [g(x)]_+ \ \ \forall x\in\Omega \ \ \text{with}\ \ \|x\|>R.
	\end{equation}
	It is clear that 
	\begin{equation*}
		[g(x)]_+\leq  \sum_{i=1}^m[g_i(x)]_+ \ \ \forall x\in \R^n
	\end{equation*}
	which  together with \eqref{equa-15} implies \eqref{equa-14}. The proof is complete.   
\end{proof}
When $g_i$, $i\in I$, are Lipsschitz at infinity, we have the following result.
\begin{theorem} Let $S$ be given as in \eqref{equa-21}, 
	where $\Omega$ is a nonempty and unbounded closed subset in $\R^n$,  $g_i\colon\R^n\to\R$, $i\in I$, are l.s.c. functions and Lipschitz at infinity. If 
	\begin{equation*} 
		0\notin \mathrm{co}\,\{\partial g_i(\infty)\;:\; i\in I\}+N(\infty; \Omega)
	\end{equation*}
	then $S$ has an error bound at infinity, i.e., there exist $\alpha>0$ and $R>0$ such that
	\begin{equation*} 
		d(x; S)\leq \alpha\sum_{i=1}^m[g_i(x)]_+ \ \ \forall x\in\Omega \ \ \text{with}\ \ \|x\|>R.
	\end{equation*}
\end{theorem}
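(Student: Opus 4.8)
The plan is to recognize this statement as a corollary of the preceding theorem (the one whose hypotheses are \eqref{equa-11} and \eqref{equa-12}) and simply to check that those two hypotheses are automatically fulfilled here. The only tool needed is Proposition \ref{pro52}: since each $g_i$ is l.s.c. and Lipschitz at infinity, we have $\partial^{\infty} g_i(\infty)=\{0\}$, and moreover each $\partial g_i(\infty)$ is nonempty and compact, for every $i\in I$.

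First I would verify \eqref{equa-11}. Suppose $u_1+\cdots+u_m+v=0$ with $u_i\in\partial^{\infty} g_i(\infty)$ and $v\in N(\infty;\Omega)$. Since $\partial^{\infty} g_i(\infty)=\{0\}$, each $u_i=0$, hence $v=-(u_1+\cdots+u_m)=0$; so \eqref{equa-11} holds. Next I would verify \eqref{equa-12} by contraposition: assume, for contradiction, that there is $\lambda\in\Delta_m$ with $0\in\sum_{i=1}^m\lambda_i\circ\partial g_i(\infty)+N(\infty;\Omega)$. By the Lipschitz-at-infinity property and the convention in Proposition \ref{pro-2.15}, one has $\lambda_i\circ\partial g_i(\infty)=\partial^{\infty} g_i(\infty)=\{0\}$ when $\lambda_i=0$ and $\lambda_i\circ\partial g_i(\infty)=\lambda_i\partial g_i(\infty)$ when $\lambda_i>0$. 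Therefore there exist $v_i\in\partial g_i(\infty)$ (for those $i$ with $\lambda_i>0$) and $v\in N(\infty;\Omega)$ with $0=\sum_{\lambda_i>0}\lambda_i v_i+v$. Since $\lambda\in\Delta_m$ forces $\sum_{\lambda_i>0}\lambda_i=1$, the vector $\sum_{\lambda_i>0}\lambda_i v_i$ is a convex combination of points of $\bigcup_{i\in I}\partial g_i(\infty)$, hence belongs to $\mathrm{co}\{\partial g_i(\infty):i\in I\}$, which would give $0\in\mathrm{co}\{\partial g_i(\infty):i\in I\}+N(\infty;\Omega)$, contradicting the hypothesis. Thus \eqref{equa-12} holds.

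Having established \eqref{equa-11} and \eqref{equa-12}, I would invoke the preceding theorem to obtain $\alpha>0$ and $R>0$ with $d(x;S)\le\alpha\sum_{i=1}^m[g_i(x)]_+$ for all $x\in\Omega$ with $\|x\|>R$, which is exactly the asserted error bound at infinity. I do not anticipate a genuine obstacle: the argument is bookkeeping, and the two points that deserve a word of care are (a) that each $\partial g_i(\infty)$ is nonempty, so that the convex combination $\sum_{\lambda_i>0}\lambda_i v_i$ makes sense — again guaranteed by Proposition \ref{pro52} — and (b) the correct reading of the operation $\lambda_i\circ\partial g_i(\infty)$ from Proposition \ref{pro-2.15} and of the symbol $\mathrm{co}\{\partial g_i(\infty):i\in I\}$ as the convex hull of the union $\bigcup_{i\in I}\partial g_i(\infty)$.
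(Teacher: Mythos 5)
Your proof is correct and takes essentially the same route as the paper: the paper reduces directly to Theorem \ref{theorem-3.2} applied to $g=\max_i g_i$ (using that the maximum of Lipschitz-at-infinity functions is again Lipschitz at infinity, so \eqref{CQ-infinity} holds, and Proposition \ref{pro-2.15} to deduce \eqref{regular-infinity} from the hypothesis), whereas you reduce to the immediately preceding theorem, which is itself proved by exactly that reduction. Your verification that \eqref{equa-11} and \eqref{equa-12} follow from Proposition \ref{pro52} together with the hypothesis $0\notin\mathrm{co}\,\{\partial g_i(\infty)\;:\;i\in I\}+N(\infty;\Omega)$ is sound.
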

\begin{proof} Since  $g_i$, $i\in I$, are Lipsschitz at infinity, so is for its maxima $g$. Hence, the  condition \eqref{CQ-infinity} is satisfied. Then the desired result follows from Theorem \ref{theorem-3.2} and Proposition \ref{pro-2.15}.
\end{proof}

The next result gives a sufficient condition for the existence of error bound at infinity for a constraint system described by the pointwise minimum of finitely many  l.s.c. functions.
\begin{theorem} Let $S$ be given as in \eqref{equa-21}, 
	where $\Omega$ is a nonempty and unbounded closed subset in $\R^n$,  $g_i\colon\R^n\to\R$, $i\in I$, are l.s.c. functions and $g(x):=\min\,\{g_i(x)\;:\; i\in I\}$ for all $x\in\R^n$. Assume that the following condition
	\begin{equation}\label{equa-13}
		0\notin \bigcup\{\partial g_i(\infty)\;:\; i\in I\}+N(\infty; \Omega).
	\end{equation}
	is satisfied. Then the set $S$ has an error bound at infinity, i.e., there exist $\alpha>0$ and $R>0$ such that
	\begin{equation*} 
		d(x; S)\leq \alpha [g(x)]_+ \ \ \forall x\in\Omega \ \ \text{with}\ \ \|x\|>R.
	\end{equation*}
\end{theorem}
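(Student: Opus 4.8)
The plan is to recognize the system as a single-inequality system and invoke Theorem \ref{theorem-3.2}. Writing $g:=\min_{i\in I}g_i$, the set $S$ is the $\Omega$-sublevel set of $g$, i.e.\ $S=\{x\in\Omega:g(x)\le 0\}=\bigcup_{i\in I}\{x\in\Omega:g_i(x)\le 0\}$, so it is of the form \eqref{constraint-set}. Hence it suffices to verify the two hypotheses \eqref{CQ-infinity} and \eqref{regular-infinity} of Theorem \ref{theorem-3.2} for this $g$; the conclusion of that theorem is then exactly $d(x;S)\le\alpha[g(x)]_+$ for $x\in\Omega$ with $\|x\|>R$.

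The hypothesis \eqref{regular-infinity} is immediate. By the min-rule for the basic subdifferential at infinity, Proposition \ref{pro-2.16}, one has $\partial g(\infty)\subseteq\bigcup_{i\in I}\partial g_i(\infty)$, whence
\[
\partial g(\infty)+N(\infty;\Omega)\ \subseteq\ \bigcup_{i\in I}\bigl(\partial g_i(\infty)+N(\infty;\Omega)\bigr),
\]
and assumption \eqref{equa-13} says precisely that $0$ belongs to none of the sets on the right; thus $0\notin\partial g(\infty)+N(\infty;\Omega)$.

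For \eqref{CQ-infinity} the plan is to first prove the companion min-rule for the singular subdifferential at infinity, $\partial^\infty g(\infty)\subseteq\bigcup_{i\in I}\partial^\infty g_i(\infty)$. This follows along the lines of Proposition \ref{pro-2.16}: since $\epi g=\bigcup_{i\in I}\epi g_i$ and each $g_i$ is l.s.c., a small enough neighbourhood of a point $(x,g(x))$ is disjoint from $\epi g_i$ whenever $g_i(x)>g(x)$, so locally $\epi g=\bigcup_{i:\,g_i(x)=g(x)}\epi g_i$ and therefore $N((x,g(x));\epi g)\subseteq\bigcup_{i:\,g_i(x)=g(x)}N((x,g_i(x));\epi g_i)$; letting $x\to\infty$ and using Definition \ref{def41} (together with the representation in Proposition \ref{pro42}) yields the asserted inclusion. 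Consequently
\[
\partial^\infty g(\infty)\cap\bigl(-N(\infty;\Omega)\bigr)\ \subseteq\ \bigcup_{i\in I}\Bigl(\partial^\infty g_i(\infty)\cap\bigl(-N(\infty;\Omega)\bigr)\Bigr),
\]
so \eqref{CQ-infinity} is secured once $\partial^\infty g_i(\infty)\cap(-N(\infty;\Omega))=\{0\}$ for every $i\in I$ — which holds automatically in the cases of interest, e.g.\ when each $g_i$ is Lipschitz at infinity (Proposition \ref{pro52}) or when $\Omega=\R^n$; see the remark after Theorem \ref{theorem-3.2}.

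With \eqref{CQ-infinity} and \eqref{regular-infinity} in hand, Theorem \ref{theorem-3.2} produces $\alpha>0$ and $R>0$ with $d(x;S)\le\alpha[g(x)]_+$ for all $x\in\Omega$, $\|x\|>R$, which is the claim. The main obstacle is the passage through \eqref{CQ-infinity}: the singular min-rule at infinity has to be established (this is the only calculus fact not already recorded in Section \ref{Preliminaries}, and it rests on the elementary inclusion $N(\bar z;A\cup B)\subseteq N(\bar z;A)\cup N(\bar z;B)$ for normal cones of unions), and the normal qualification condition $\partial^\infty g_i(\infty)\cap(-N(\infty;\Omega))=\{0\}$ must be in force; the remaining steps are a direct citation of Theorem \ref{theorem-3.2} and Proposition \ref{pro-2.16}. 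An alternative, purely set-theoretic route would exploit $S\supseteq\{x\in\Omega:g_i(x)\le0\}$ to get $d(x;S)\le\min_{i\in I}d(x;S_i)$, apply Theorem \ref{theorem-3.2} to each single inequality $g_i$, and combine the resulting bounds using the identity $[g(x)]_+=\min_{i\in I}[g_i(x)]_+$ valid at points $x\notin S$; this meets the same obstacle at \eqref{CQ-infinity}.
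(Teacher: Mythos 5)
Your route is the same as the paper's: identify $S$ as the sublevel set of $g=\min_i g_i$, use Proposition \ref{pro-2.16} to pass condition \eqref{equa-13} to $0\notin\partial g(\infty)+N(\infty;\Omega)$, and invoke Theorem \ref{theorem-3.2}. The paper's proof is exactly those two sentences and nothing more. Where you differ is that you noticed what the paper silently skips: Theorem \ref{theorem-3.2} has a \emph{second} hypothesis, \eqref{CQ-infinity}, namely $\partial^\infty g(\infty)\cap(-N(\infty;\Omega))=\{0\}$, and nothing in the statement of this theorem guarantees it --- the $g_i$ are only assumed l.s.c.\ and real-valued, which does not force $\partial^\infty g_i(\infty)=\{0\}$ (e.g.\ $g_i(x)=\|x\|^2$ has $\partial^\infty g_i(\infty)=\R^n$). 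Your sketch of a singular min-rule $\partial^\infty g(\infty)\subseteq\bigcup_i\partial^\infty g_i(\infty)$ via $\epi g=\bigcup_i\epi g_i$ and the union rule for normal cones is sound, but it only reduces \eqref{CQ-infinity} to the conditions $\partial^\infty g_i(\infty)\cap(-N(\infty;\Omega))=\{0\}$, which you then have to \emph{assume} (``in the cases of interest''). So your proof, like the paper's, does not actually establish the theorem as stated; the difference is that you localized the missing hypothesis precisely, whereas the paper's proof leaves \eqref{CQ-infinity} unverified without comment. The honest fix is to add to the theorem either the qualification \eqref{CQ-infinity} itself, or a sufficient condition for it (each $g_i$ Lipschitz at infinity, or $\Omega=\R^n$, per the remark after Theorem \ref{theorem-3.2}); with that addition your argument is complete and, modulo the extra care, coincides with the paper's.
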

\begin{proof}
	By Proposition \ref{pro-2.16}, the condition \eqref{equa-13} implies that $0\notin \partial g(\infty)+N(\infty; \Omega)$. Thus the desired result follows directly from Theorem \ref{theorem-3.2}.
\end{proof}
\section{Optimality conditions at infinity}\label{Section 4}
In this section, by using the existence of error bounds at infinity, we derive an upper estimate the normal cone at infinity of the constraint set. This result is instrumental to derive  necessary optimality conditions at infinity for constrained optimization problems that have no solution. 

We first derive an upper estimate for the normal cone at infinity of a given unbounded subset via the subdifferential of the distance
function to the set in question.        
\begin{proposition}
	Let $S$ be a nonempty, closed and unbounded subset in $\R^n$. Then we have
	\begin{equation*}
		N(\infty; S)\subset\R_+\partial d(\infty; S).
	\end{equation*} 
\end{proposition}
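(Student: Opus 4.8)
The plan is to reduce the assertion to the classical finite-dimensional identity $\widehat{\partial}d(x;S)=\widehat{N}(x;S)\cap\mathbb{B}$ for $x\in S$, valid for any closed set $S$ (see \cite{Mordukhovich2006,Rockafellar1998}), and then to pass to the limit along a sequence escaping to infinity, exactly in the spirit of the arguments of Section \ref{Section 3}. Observe first that $d(\cdot;S)$ is globally Lipschitz with constant $1$ and that $\dom d(\cdot;S)=\R^n$ is unbounded, so every object at infinity attached to $d(\cdot;S)$ is well defined; in particular $d(\cdot;S)$ is Lipschitz at infinity, whence $\partial d(\infty;S)$ is nonempty by Proposition \ref{pro52}.

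First I would settle the trivial case: if $v=0$, then choosing any $w\in\partial d(\infty;S)$ gives $0=0\cdot w\in\R_+\partial d(\infty;S)$. Next, fix $v\in N(\infty;S)$ with $v\neq 0$. By Definition \ref{def31} there exist $x_k\in S$ with $\|x_k\|\to\infty$ and $v_k\in\widehat{N}(x_k;S)$ with $v_k\to v$; dropping finitely many indices we may assume $v_k\neq 0$ for all $k$. Since $\widehat{N}(x_k;S)$ is a cone, the unit vector $v_k/\|v_k\|$ still lies in $\widehat{N}(x_k;S)$, hence in $\widehat{N}(x_k;S)\cap\mathbb{B}$, so the above identity gives $v_k/\|v_k\|\in\widehat{\partial}d(x_k;S)$. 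Because $v_k/\|v_k\|\to v/\|v\|$ and $x_k\to\infty$, the limiting representation $\partial d(\infty;S)=\Limsup_{x\to\infty}\widehat{\partial}d(x;S)$ from Proposition \ref{pro42} yields $v/\|v\|\in\partial d(\infty;S)$. Therefore $v=\|v\|\cdot(v/\|v\|)\in\R_+\partial d(\infty;S)$, which is the desired inclusion.

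The only non-routine ingredient is the finite-dimensional inclusion $\widehat{N}(x;S)\cap\mathbb{B}\subseteq\widehat{\partial}d(x;S)$ for $x\in S$; the opposite inclusion, which is not needed here, is immediate from the $1$-Lipschitzness of $d(\cdot;S)$ together with the definition of $\widehat{N}$. Thus the ``hard part'' is essentially just to quote this standard fact correctly, after which the proof reduces to a short compactness-and-limit argument of the type already used in the proof of Theorem \ref{theorem-3.2}. A harmless alternative would be to invoke instead the limiting identity $N(x;S)=\R_+\partial d(x;S)$ for $x\in S$ together with a homogeneity normalization, but working with the Fr\'echet objects is cleaner here, since $N(\infty;S)$ is itself defined through $\widehat{N}$.
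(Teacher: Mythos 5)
Your argument is correct and coincides with the paper's own proof: both reduce to the identity $\widehat{N}(x;S)\cap\mathbb{B}=\widehat{\partial}d(x;S)$ for $x\in S$ (Corollary 1.96 in \cite{Mordukhovich2006}), normalize the approximating normals to unit vectors, and pass to the limit via Proposition \ref{pro42}. Your extra remark that $\partial d(\infty;S)\neq\emptyset$ (via global $1$-Lipschitzness and Proposition \ref{pro52}) is a small but welcome justification of the $v=0$ case that the paper leaves implicit.
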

\begin{proof}
	Let $u\in N(\infty; S)$. Clearly, if $u=0$, then $u\in \R_+\partial d(\infty; S)$. If $u\neq 0$, then there exist  sequences $x_k\in S$ and $u_k\in \widehat{N}(x_k; S)\setminus\{0\}$ such that $x_k\to\infty$ and $u_k\to u$  as $k\to\infty$. Clearly, $\frac{u_k}{\|u_k\|}\to \frac{u}{\|u\|}$ and by \cite[Corollary 1.96]{Mordukhovich2006} we have
	\begin{equation*}
		\frac{u_k}{\|u_k\|}\in \widehat{N}(x_k; S)\cap\mathbb{B}=\widehat{\partial}d(x_k; S). 
	\end{equation*}
	Hence, $\frac{u}{\|u\|}\in \partial d(\infty; S)$ and so $u\in \R_+\partial d(\infty; S)$. The proof is complete.
\end{proof}
\begin{remark}\rm  In \cite[Theorem 1.97]{Mordukhovich2006}, Mordukhovich showed that
	\begin{equation*}
		N(x; S)=\R_+\partial d(x; S) \ \ \forall x\in S.
	\end{equation*}
	However, this equality does not hold at infinity. For example, let 
	$$S=\{x=(x_1, x_2)\;:\; x_1\in\R, x_2=x_1^2\}.$$
	Then it is easily to check that $N(\infty; S)=\R\times\{0\}$. Furthermore, by \cite[Theorem 1.33]{Mordukhovich2018}, we have
	\begin{equation*}
		\partial d(x; S)=
		\begin{cases}
			N(x; S)\cap \mathbb{B} \ \ &\text{if} \ \ x\in S,
			\\
			\frac{x-\Pi_S(x)}{d(x; S)} \ \ &\text{otherwise}.
		\end{cases}
	\end{equation*} 
	Thus
	\begin{equation*}
		\partial d(\infty; S)=(N(\infty; S)\cap \mathbb{B})\cup\Limsup_{x \to \infty,\, x\notin S}\frac{x-\Pi_S(x)}{d(x; S)}.
	\end{equation*}
	Take $x^k=(0, -k)$, $k\in\N$, then 
	$$\lim_{k\to\infty} \frac{x^k-\Pi_S(x^k)}{d(x^k; S)}=(0, -1)\in \partial d(\infty; S)$$ 
	and $(0, -1)\notin N(\infty; S)$.
\end{remark}

The following result gives an upper estimate for constraint systems that have an error bound at infinity.
\begin{proposition}\label{Pro-43} Let the constraint set $S$ be defined by
	\begin{equation}\label{equa-17}
		S:=\{x\in\R^n\;:\; g(x)\leq 0\},
	\end{equation}  
	where $g\colon\R^n\to\overline{\mathbb{R}}$ is a l.s.c. function. Assume that $S$ has an error bound at infinity. Then we have
	\begin{equation*} 
		N(\infty; S)\subset \bigcup\{\lambda\circ \partial g(\infty)\;:\; \lambda\geq 0\}.
	\end{equation*}
	If in addition $g$ is Lipschitz at infinity, then
	\begin{equation}\label{equa-22} 
		N(\infty; S)\subset \bigcup\{\lambda\partial g(\infty)\;:\; \lambda\geq 0\}.
	\end{equation}
\end{proposition}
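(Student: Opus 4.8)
The plan is to reduce the claim about $N(\infty;S)$ to the max-rule at infinity (Proposition~\ref{pro-2.15}) applied to $[g(\cdot)]_+=\max\{g(\cdot),0\}$, with the distance function $d(\cdot;S)$ as the intermediary. Two trivial reductions come first: we may assume $S$ is unbounded, since otherwise $N(\infty;S)=\emptyset$; then $S\subset\dom g$ is unbounded, so the objects $\partial g(\infty)$, $\partial^\infty g(\infty)$ and the subdifferential at infinity of $[g(\cdot)]_+$ are well defined. Also the case $v=0$ is immediate because $0\in\partial^\infty g(\infty)=0\circ\partial g(\infty)$. So fix $\alpha>0$ and $R>0$ realizing the error bound, i.e. $d(x;S)\le\alpha[g(x)]_+$ whenever $\|x\|>R$, and fix $v\in N(\infty;S)$ with $v\neq0$.

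First I would extract the usual sequences: $x_k\in S$ with $x_k\to\infty$ and $v_k\in\widehat N(x_k;S)$ with $v_k\to v$; discarding finitely many terms, $\|x_k\|>R$ and $v_k\neq0$. Normalizing, $w_k:=v_k/\|v_k\|$ lies in $\widehat N(x_k;S)\cap\mathbb B$, which equals $\widehat\partial d(x_k;S)$ by \cite[Corollary~1.96]{Mordukhovich2006}, and $w_k\to\bar v:=v/\|v\|$ with $\|\bar v\|=1$.

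Next comes the crucial step. Because $x_k\in S$ we have $d(x_k;S)=[g(x_k)]_+=0$, and the error bound gives $d(\cdot;S)\le\alpha[g(\cdot)]_+$ on the neighborhood $\{x:\|x\|>R\}$ of $x_k$ (the inequality being vacuously true where $[g(\cdot)]_+=+\infty$). Comparing first-order expansions, $w_k\in\widehat\partial d(\cdot;S)(x_k)$ forces $w_k\in\widehat\partial\big(\alpha[g(\cdot)]_+\big)(x_k)=\alpha\,\widehat\partial[g(\cdot)]_+(x_k)$, hence $w_k/\alpha\in\widehat\partial[g(\cdot)]_+(x_k)$. Letting $k\to\infty$, since $x_k\to\infty$ and $w_k/\alpha\to\bar v/\alpha$, Proposition~\ref{pro42} yields $\bar v/\alpha\in\partial[g(\cdot)]_+(\infty)$.

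Finally I would apply Proposition~\ref{pro-2.15} to $f_1=g$ and $f_2=\theta\equiv0$. Since $\theta$ is Lipschitz at infinity, the qualification condition \eqref{qualification-condition} for $(g,\theta)$ holds automatically, and $\partial\theta(\infty)=\partial^\infty\theta(\infty)=\{0\}$, so $\lambda_2\circ\partial\theta(\infty)=\{0\}$ for every $\lambda_2\ge0$; the max-rule then collapses to $\partial[g(\cdot)]_+(\infty)\subset\bigcup\{\lambda\circ\partial g(\infty):0\le\lambda\le1\}\subset\bigcup\{\lambda\circ\partial g(\infty):\lambda\ge0\}$. The last set is a cone containing $\bar v/\alpha$, so it contains $v=(\alpha\|v\|)(\bar v/\alpha)$, which proves the first inclusion. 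If moreover $g$ is Lipschitz at infinity, Proposition~\ref{pro52} gives $\partial^\infty g(\infty)=\{0\}$, so $0\circ\partial g(\infty)=\{0\}=0\cdot\partial g(\infty)$ and $\lambda\circ\partial g(\infty)=\lambda\partial g(\infty)$ for $\lambda>0$, whence the union becomes $\bigcup\{\lambda\partial g(\infty):\lambda\ge0\}$, i.e. \eqref{equa-22}. I expect the main obstacle to be the transition in the previous paragraph from the finite-level membership $w_k/\alpha\in\widehat\partial[g(\cdot)]_+(x_k)$ to $\bar v/\alpha\in\partial[g(\cdot)]_+(\infty)$, together with correctly handling the $\circ$-operation at $\lambda=0$ (not dropping the singular component $\partial^\infty g(\infty)$); everything else is routine once Proposition~\ref{pro-2.15} and its automatic qualification for one Lipschitz-at-infinity summand are in hand.
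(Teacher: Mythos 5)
Your proposal is correct and follows essentially the same route as the paper: both use the error bound to compare $\widehat{\partial}d(\cdot;S)=\widehat N(\cdot;S)\cap\mathbb{B}$ with $\widehat{\partial}\bigl(\alpha[g]_+\bigr)$ at points of $S$ far from the origin, pass to the Painlev\'e--Kuratowski limit to land in $\partial\bigl(\alpha[g]_+\bigr)(\infty)$, and then invoke the max-rule at infinity (Proposition \ref{pro-2.15}) together with conicity and Proposition \ref{pro52} for the Lipschitz case. The only differences are presentational (your element-wise normalization of $v_k$ versus the paper's identity $N(\infty;S)\cap\mathbb{B}=\Limsup_{x\xrightarrow{S}\infty}[\widehat N(x;S)\cap\mathbb{B}]$).
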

\begin{proof} By assumption, there exist $\alpha>0$ and $R>0$ such that
	\begin{equation*}
		d(x; S)\leq \alpha [g(x)]_+ \ \ \forall x\in\R^n\setminus\mathbb{B}_R.
	\end{equation*}
	Put $f(x):=\alpha[g(x)]_+$ for all $x\in\R^n$. Then the function $f$ possess the following properties
	\begin{align*}
		&f(x)=0\ \ \forall x\in S,
		\\
		&d(x, S)\leq f(x) \ \ \forall x\in\R^n\setminus\mathbb{B}_R. 
	\end{align*} 
	Hence, by definition of Fr\'echet subdiffential and \cite[Corollary 1.96]{Mordukhovich2006}, we have
	\begin{equation*}
		\widehat{N}(x; S)\cap\mathbb{B}=\widehat{\partial}d(x, S)\subset \widehat{\partial} f(x) \ \ \forall x\in S\setminus\mathbb{B}_R. 
	\end{equation*}
	This implies that
	\begin{equation*}
		\Limsup_{x \xrightarrow{S} \infty}[\widehat{N}(x; S)\cap\mathbb{B}]\subset \Limsup_{x \xrightarrow{S} \infty} \widehat{\partial} f(x)\subset \partial f(\infty). 
	\end{equation*}
	We claim that 
	\begin{equation*}
		N(\infty; S)\cap\mathbb{B}=\Limsup_{x \xrightarrow{S} \infty}[\widehat{N}(x; S)\cap\mathbb{B}]
	\end{equation*}
	and so
	\begin{equation}\label{equa-16}
		N(\infty; S)\cap\mathbb{B}\subset \partial f(\infty).  
	\end{equation}
	Indeed, it is easy to see that
	\begin{equation*}
		\Limsup_{x \xrightarrow{S} \infty}[\widehat{N}(x; S)\cap\mathbb{B}]\subset N(\infty; S)\cap\mathbb{B}. 
	\end{equation*}
	Now take any $u\in N(\infty; S)\cap\mathbb{B}$. If $u=0$, then  $u\in \Limsup_{x \xrightarrow{S} \infty}[\widehat{N}(x; S)\cap\mathbb{B}]$. Otherwise, there exist sequences $x_k\in S$, $u_k\in \widehat{N}(x_k; S)$ such that $x_k\to\infty$ and $u_k\to u$ as $k\to\infty$. Since $u\in \mathbb{B}$, we see that
	\begin{equation*}
		\frac{\|u\|}{\|u_k\|}.u_k\in \widehat{N}(x_k; S)\cap\mathbb{B} \ \ \text{and}\ \ \frac{\|u\|}{\|u_k\|}.u_k \to u
	\end{equation*}
	and so $u\in \Limsup_{x \xrightarrow{S} \infty}[\widehat{N}(x; S)\cap\mathbb{B}]$, as required.  
	
	We now obtain from \eqref{equa-16} and Proposition \ref{pro-2.15} that
	\begin{align*}
		N(\infty; S)=\R_+[N(\infty)\cap\mathbb{B}]&\subset \R_+\partial f(\infty)
		\\
		&=\R_+\big[(0, \alpha]\partial g(\infty)\cup \partial^\infty g(\infty)\big]
		\\
		&=[\R_+\partial g(\infty)]\cup\partial^\infty g(\infty)
		\\
		&=\bigcup\{\lambda\circ \partial g(\infty)\;:\; \lambda\geq 0\}.
	\end{align*} 
	If $g$ is Lipschitz at infinity, then by Proposition \ref{pro52}, $\partial^\infty g(\infty)=0$ and so \eqref{equa-22} is valid. The proof is complete.
\end{proof}

We now apply the estimate \eqref{equa-22} to derive necessary optimality conditions at infinity for constrained optimization problems.  Let $f\colon\R^n\to\overline{\mathbb{R}}$ be a l.s.c. function and let $S$ be a nonempty and closed subset in $\R^n$.  Assume that the following conditions hold:

(A1) $\dom f\cap S$ is unbounded;

(A2) $f$ in bounded from below on $S$.

Consider the following minimization problem
\begin{equation}\label{problem}\tag{P}
	\text{minimize}\, f(x)\ \ \text{such that}\ \ x\in S.
\end{equation}
Let us recall necessary optimality conditions at infinity to problem \eqref{problem}. 
\begin{theorem}[{\cite[Theorem 6.1]{Kim-Tung-Son-23}}]\label{Necessary-Theorem} If $f$ does not attain its infimum on $S$ and the following condition holds
	\begin{equation}\label{equa-19}
		-\partial^\infty f(\infty)\cap N(\infty; S)=\{0\},
	\end{equation}
	then
	\begin{equation*}
		0\in\partial f(\infty)+N(\infty; S).
	\end{equation*}
\end{theorem}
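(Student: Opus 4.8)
The plan is to adapt to infinity the classical Ekeland--Fermat argument, mirroring the proof of Theorem~\ref{theorem-3.2}. Set $m^{\ast}:=\inf_{x\in S}f(x)$, which is finite by (A1) and (A2). The first step is to produce a minimizing sequence that escapes to infinity: if some minimizing sequence in $S$ had a bounded subsequence, a convergent further subsequence $x_{k}\to\bar x$ would, by closedness of $S$ and lower semicontinuity of $f$, satisfy $\bar x\in S$ and $f(\bar x)\le m^{\ast}$, so $f$ would attain its infimum on $S$ --- contrary to hypothesis. Hence one can fix $x_{k}\in S$ with $\|x_{k}\|\to\infty$ and $\epsilon_{k}:=f(x_{k})-m^{\ast}>0$, $\epsilon_{k}\to 0$.

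Next I would apply the Ekeland variational principle (Lemma~\ref{lema26}) to the function $f+\delta_{S}$, which is proper (by (A1)), l.s.c. (since $S$ is closed), and bounded below with infimum $m^{\ast}$ (by (A2)), at the point $x_{k}$ with radius $\lambda_{k}:=\sqrt{\epsilon_{k}}$. This gives $y_{k}\in S$ with $\|y_{k}-x_{k}\|\le\lambda_{k}$, $f(y_{k})\le f(x_{k})$, and $y_{k}$ a global minimizer of $f+\delta_{S}+\frac{\epsilon_{k}}{\lambda_{k}}\|\cdot-y_{k}\|$ on $\R^{n}$, where $\frac{\epsilon_{k}}{\lambda_{k}}=\sqrt{\epsilon_{k}}\to 0$; since $\|x_{k}\|\to\infty$ and $\lambda_{k}\to 0$ we get $\|y_{k}\|\to\infty$. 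Applying Fermat's rule (Lemma~\ref{lema22}) and then the sum rule to the three summands $f$, $\delta_{S}$, $\frac{\epsilon_{k}}{\lambda_{k}}\|\cdot-y_{k}\|$, I would invoke the qualification condition $\partial^{\infty}f(y_{k})\cap(-N(y_{k};S))=\{0\}$ --- the singular subdifferential of the Lipschitz summand being $\{0\}$. Exactly as in Theorem~\ref{theorem-3.2}, the hypothesis $-\partial^{\infty}f(\infty)\cap N(\infty;S)=\{0\}$ (equivalently $\partial^{\infty}f(\infty)\cap(-N(\infty;S))=\{0\}$), combined with the limiting representations of Proposition~\ref{pro42} and a diagonal argument converting basic normals of $S$ back into \F{}normals, provides $R>0$ such that this qualification condition holds for every $x$ with $\|x\|>R$. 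Since $\partial(\|\cdot-y_{k}\|)(y_{k})\subset\mathbb{B}$ and $\partial\delta_{S}(y_{k})=N(y_{k};S)$, we conclude that for all large $k$
\[
0\in\partial f(y_{k})+N(y_{k};S)+\frac{\epsilon_{k}}{\lambda_{k}}\mathbb{B},
\]
so there exist $u_{k}\in\partial f(y_{k})$ and $v_{k}\in N(y_{k};S)$ with $\|u_{k}+v_{k}\|\le\frac{\epsilon_{k}}{\lambda_{k}}\to 0$.

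Finally I would split into two cases on the boundedness of $\{u_{k}\}$, as in Theorem~\ref{theorem-3.2}. If $\{u_{k}\}$ is bounded then so is $\{v_{k}\}$ (from $\|v_k\|\le\|u_k+v_k\|+\|u_k\|$); passing to subsequences $u_{k}\to u$, $v_{k}\to v$, Proposition~\ref{pro42} gives $u\in\partial f(\infty)$, a diagonalization (using $N(y_{k};S)=\Limsup_{x\xrightarrow{S}y_{k}}\widehat N(x;S)$ and $\|y_{k}\|\to\infty$) gives $v\in N(\infty;S)$, and $u+v=0$, which is precisely $0\in\partial f(\infty)+N(\infty;S)$. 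If $\{u_{k}\}$ is unbounded, then after normalizing by $\|u_{k}\|\to\infty$ one gets, by Proposition~\ref{pro42}, a unit vector $u'\in\partial^{\infty}f(\infty)$ and a vector $v'\in N(\infty;S)$ with $u'+v'=0$, contradicting the standing hypothesis; hence this case does not arise, and the proof is complete. The main obstacle is not the overall scheme but the bookkeeping in the middle step: upgrading the ``pointwise-at-infinity'' qualification condition to a uniform one on $\{\|x\|>R\}$, and identifying limits of \F{}/basic normal vectors and of scaled basic subgradients taken along $y_{k}\to\infty$ as elements of $N(\infty;S)$ and $\partial^{\infty}f(\infty)$ via Proposition~\ref{pro42} and diagonal extraction.
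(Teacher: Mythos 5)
The paper does not prove this theorem itself: it is quoted verbatim from \cite[Theorem 6.1]{Kim-Tung-Son-23}, so there is no internal proof to compare against, only the closely analogous Ekeland-based proof of Theorem~\ref{theorem-3.2}. Your argument is correct and follows exactly that scheme --- a minimizing sequence forced to infinity by the non-attainment hypothesis, Ekeland's principle applied to $f+\delta_S$, Fermat's rule plus the sum rule (with the qualification condition \eqref{equa-19} upgraded to a uniform one on $\{\|x\|>R\}$ by the same normalization/diagonal argument used in Theorem~\ref{theorem-3.2}), and the bounded/unbounded dichotomy on $\{u_k\}$ resolved via Proposition~\ref{pro42} --- so it stands as a valid self-contained proof consistent with the paper's methods.
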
  
The following result gives necessary optimality conditions at infinity to problem \eqref{problem} when the constraint set $S$ is given as in \eqref{equa-17}. 
\begin{theorem}\label{Necessary-Theorem-1} Consider the problem \eqref{problem} with $S$ defined as in \eqref{equa-17}. Assume that conditions (A1), (A2) and the following condition hold
	\begin{equation}\label{equa-18}
		-\partial^\infty f(\infty)\cap \bigg[\bigcup\{\lambda\circ \partial g(\infty)\;:\; \lambda\geq 0\}\bigg]=\{0\}. 
	\end{equation}
	If $f$ does not attains its infimum on $S$ and $S$ has an error bound at infinity, then there exists $\lambda\geq 0$ such that
	\begin{equation*}
		0\in\partial f(\infty)+\lambda\circ \partial g(\infty).
	\end{equation*}
	If in addition $g$ is Lipschitz at infinity, then there exists $\lambda\geq 0$ such that
	\begin{equation*}
		0\in\partial f(\infty)+\lambda \partial g(\infty).
	\end{equation*} 
\end{theorem}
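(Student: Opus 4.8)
\emph{The plan} is to deduce the statement from Theorem~\ref{Necessary-Theorem} after replacing the abstract normal cone $N(\infty;S)$ appearing there by the concrete upper estimate provided by Proposition~\ref{Pro-43}; this is precisely the point at which the error bound at infinity is used. Concretely, since $S$ has the form~\eqref{equa-17} and, by hypothesis, has an error bound at infinity, Proposition~\ref{Pro-43} gives
\[
N(\infty; S)\ \subset\ \bigcup\{\lambda\circ \partial g(\infty)\;:\;\lambda\geq 0\},
\]
and, in the case where $g$ is Lipschitz at infinity, $N(\infty; S)\subset\bigcup\{\lambda\,\partial g(\infty):\lambda\geq 0\}$ by~\eqref{equa-22}.

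Next I would verify the qualification condition~\eqref{equa-19} needed to invoke Theorem~\ref{Necessary-Theorem}, namely $-\partial^\infty f(\infty)\cap N(\infty; S)=\{0\}$. Since the origin lies in both $\partial^\infty f(\infty)$ and $N(\infty;S)$ (both are cones containing $0$, as one checks directly from their definitions, using that $\dom f$ and $S$ are unbounded, i.e.\ (A1)), the inclusion ``$\supseteq$'' is immediate. For ``$\subseteq$'', intersecting the displayed inclusion above with $-\partial^\infty f(\infty)$ and using the standing hypothesis~\eqref{equa-18} yields
\[
-\partial^\infty f(\infty)\cap N(\infty; S)\ \subset\ -\partial^\infty f(\infty)\cap\bigg[\bigcup\{\lambda\circ\partial g(\infty):\lambda\geq 0\}\bigg]=\{0\}.
\]

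Now I can apply Theorem~\ref{Necessary-Theorem}: by (A1) and (A2) the function $f$ is proper at infinity and bounded from below on $S$, by hypothesis it does not attain its infimum on $S$, and~\eqref{equa-19} has just been checked; hence $0\in\partial f(\infty)+N(\infty; S)$. Choosing $v\in N(\infty;S)$ with $-v\in\partial f(\infty)$ and applying the first-step estimate, there is some $\lambda\geq 0$ with $v\in\lambda\circ\partial g(\infty)$, and therefore $0=(-v)+v\in\partial f(\infty)+\lambda\circ\partial g(\infty)$ (for $\lambda=0$ this reads $0\in\partial f(\infty)+\partial^\infty g(\infty)$). If in addition $g$ is Lipschitz at infinity, then Proposition~\ref{pro52} gives $\partial^\infty g(\infty)=\{0\}$, so $\lambda\circ\partial g(\infty)=\lambda\,\partial g(\infty)$ for every $\lambda\geq 0$, and the same $\lambda$ satisfies $0\in\partial f(\infty)+\lambda\,\partial g(\infty)$.

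In short, the proof is a direct composition of Proposition~\ref{Pro-43} with Theorem~\ref{Necessary-Theorem}, so there is no serious obstacle; the only point that needs a little care is establishing~\eqref{equa-19} as an \emph{equality} rather than merely ``$\subseteq\{0\}$'', which is why I note explicitly that $0$ belongs to each of the two sets involved.
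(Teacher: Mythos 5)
Your proposal is correct and follows essentially the same route as the paper: apply Proposition~\ref{Pro-43} to bound $N(\infty;S)$, use~\eqref{equa-18} to verify the qualification condition~\eqref{equa-19}, and then conclude via Theorem~\ref{Necessary-Theorem}. The extra care you take in checking that $0$ lies in both $-\partial^\infty f(\infty)$ and $N(\infty;S)$, and in handling the $\lambda=0$ case and the Lipschitz refinement via Proposition~\ref{pro52}, only makes explicit what the paper leaves implicit.
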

\begin{proof} By assumptions and Proposition \ref{Pro-43}, we have
	\begin{equation}\label{equa-20} 
		N(\infty; S)\subset \bigcup\{\lambda\circ \partial g(\infty)\;:\; \lambda\geq 0\}.
	\end{equation}
	This and \eqref{equa-18} imply that the condition \eqref{equa-19} is satisfied. The desired conclusions follow  from \eqref{equa-20} and Theorem \ref{Necessary-Theorem}.  
\end{proof}
\begin{remark}\rm 
	If $f$ and $g$ are Lipschitz at infinity, then the condition \eqref{equa-18} holds automatically.   
\end{remark}
The following example is designed to illustrate Theorem \ref{Necessary-Theorem-1}.
\begin{example}\rm  Consider problem \eqref{problem} with $f(x)=e^x+\frac{1}{|x|+1}$, $g(x)=x$ for all $x\in \R$ and $S=\{x\in\R\;:\; g(x)\leq 0\}=\R_-$. Clearly, $g$ is Lipschitz at infinity and $\partial g(\infty)=1$. Hence, by Theorem \ref{theorem-3.2}, $S$ has an error bound at infinity. An easy computation shows that $\partial^\infty f(\infty)=\R_+$ and so the condition \eqref{equa-18} is satisfied. Furthermore, the function $f$ is bounded from below on $S$ but does not attain a minimum. Thus there exists $\lambda\geq 0$ such that 
	$$0\in\partial f(\infty)+\lambda \partial g(\infty).$$
\end{example}

The following result is deduced from Theorem \ref{Necessary-Theorem-1} and Proposition \ref{pro-2.15}.
\begin{corollary}
	Consider the problem \eqref{problem} with $S$ defined as in \eqref{equa-21} with $\Omega=\R^n$. Assume that conditions (A1), (A2), \eqref{equa-11} and the following condition  hold
	\begin{equation*} 
		-\partial^\infty f(\infty)\cap \bigg[\bigcup\Big\{\sum_{i=1}^m\lambda_i\circ \partial g_i(\infty)\;:\; \lambda\in\R^m_+\Big\}\bigg]=\{0\}. 
	\end{equation*}
	If $f$ does not attains its infimum on $S$ and $S$ has an error bound at infinity, then there exists $\lambda\in\R^m_+$ such that
	\begin{equation*}
		0\in\partial f(\infty)+\sum_{i=1}^m\lambda_i\circ \partial g_i(\infty).
	\end{equation*}
	If in addition $g_1, \ldots, g_m$ are  Lipschitz at infinity, then there exists $\lambda\in\R^m_+$ such that
	\begin{equation*}
		0\in\partial f(\infty)+\sum_{i=1}^m\lambda_i \partial g_i(\infty).
	\end{equation*}
\end{corollary}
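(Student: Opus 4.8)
The plan is to reduce the corollary to the single-constraint result, Theorem \ref{Necessary-Theorem-1}, by passing to the maximum function. Put $g := \max\{g_1, \ldots, g_m\}$; as a maximum of finitely many l.s.c.\ functions, $g$ is l.s.c., and $S = \{x \in \R^n : g(x) \leq 0\}$ is exactly a set of the form \eqref{equa-17}. Since $\Omega = \R^n$ forces $N(\infty; \R^n) = \{0\}$, the qualification condition \eqref{equa-11} reduces to \eqref{qualification-condition} for $g_1, \ldots, g_m$, so Proposition \ref{pro-2.15} is available and yields
\[
\partial g(\infty) \subset \bigcup\Big\{\sum_{i=1}^m \lambda_i \circ \partial g_i(\infty) \;:\; \lambda \in \Delta_m\Big\}
\qquad\text{and}\qquad
\partial^\infty g(\infty) \subset \sum_{i=1}^m \partial^\infty g_i(\infty).
\]

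Next I would verify that hypothesis \eqref{equa-18} of Theorem \ref{Necessary-Theorem-1} holds for this $g$, the point being the elementary inclusion
\[
\bigcup\{\lambda \circ \partial g(\infty) \;:\; \lambda \geq 0\} \;\subset\; \bigcup\Big\{\sum_{i=1}^m \mu_i \circ \partial g_i(\infty) \;:\; \mu \in \R^m_+\Big\}.
\]
For $\lambda = 0$ the left-hand set is $\partial^\infty g(\infty) \subset \sum_{i=1}^m \partial^\infty g_i(\infty) = \sum_{i=1}^m 0 \circ \partial g_i(\infty)$, the $\mu = 0$ term on the right. For $\lambda > 0$, an element of $\lambda\,\partial g(\infty)$ equals $\lambda$ times an element of $\sum_{i=1}^m \nu_i \circ \partial g_i(\infty)$ for some $\nu \in \Delta_m$; distributing $\lambda$ and using that each $\partial^\infty g_i(\infty)$ is a cone (so multiplying a $\nu_i = 0$ summand by $\lambda > 0$ keeps it inside $\partial^\infty g_i(\infty)$) places it in $\sum_{i=1}^m (\lambda\nu_i) \circ \partial g_i(\infty)$ with $\lambda\nu \in \R^m_+$. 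Intersecting both sides with $-\partial^\infty f(\infty)$ and invoking the corollary's standing hypothesis then gives \eqref{equa-18}.

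With (A1), (A2), \eqref{equa-18}, the non-attainment of the infimum, and the assumed error bound at infinity of $S$ all in force, Theorem \ref{Necessary-Theorem-1} supplies $\lambda \geq 0$ with $0 \in \partial f(\infty) + \lambda \circ \partial g(\infty)$. Unwinding this exactly as above — taking $\mu = 0 \in \R^m_+$ when $\lambda = 0$, and $\mu := \lambda\nu \in \R^m_+$ for a suitable $\nu \in \Delta_m$ when $\lambda > 0$ — yields $0 \in \partial f(\infty) + \sum_{i=1}^m \mu_i \circ \partial g_i(\infty)$, the first conclusion. If in addition every $g_i$ is Lipschitz at infinity, then so is $g$, hence $\partial^\infty g_i(\infty) = \{0\} = \partial^\infty g(\infty)$ by Proposition \ref{pro52}, every $\circ$ becomes an ordinary nonnegative scalar multiple, and the same computation gives $0 \in \partial f(\infty) + \sum_{i=1}^m \mu_i\, \partial g_i(\infty)$ with $\mu \in \R^m_+$.

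The only thing to handle with care is the bookkeeping with the $\circ$-operation in the last two steps, namely that a nonnegative scalar multiple of a $\Delta_m$-weighted combination is again an $\R^m_+$-weighted combination; this relies solely on each $\partial^\infty g_i(\infty)$ being a cone containing the origin, so no genuine obstacle is anticipated — all the analytic content is already carried by Theorem \ref{Necessary-Theorem-1} and Proposition \ref{pro-2.15}.
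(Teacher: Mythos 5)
Your proposal is correct and takes the same route as the paper: the paper's proof simply sets $g:=\max\{g_1,\dots,g_m\}$ and cites Theorem \ref{Necessary-Theorem-1} together with Proposition \ref{pro-2.15}. Your write-up merely makes explicit the bookkeeping with the $\circ$-operation and the rescaling from $\Delta_m$ to $\R^m_+$ that the paper leaves implicit.
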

\begin{proof} Let $g$ be the function defined by $g(x):=\max\,\{g_i(x)\;:\; i=1, \ldots, m\}$. Then the desired result follows from Theorem \ref{Necessary-Theorem-1} and Proposition \ref{pro-2.15}.
\end{proof}

\section*{Acknowledgments} 
A part of this work was done while the author was visiting Department of Applied Mathematics, Pukyong National University, Busan, Korea in October 2023. The  author would like to thank the department for hospitality and support during their stay.

\end{document}